\newtheorem{thm}{Theorem}[section]
\newtheorem{cor}[thm]{Corollary}
\newtheorem{lem}[thm]{Lemma}
\newtheorem{prop}[thm]{Proposition}
\theoremstyle{definition}
\theoremstyle{remark}
\numberwithin{equation}{section}
\newcommand{\mbb}{\mathbb}
\newcommand{\ra}{\rightarrow}
\newcommand{\z}{\zeta}
\newcommand{\pa}{\partial}
\newcommand{\ov}{\overline}
\newcommand{\sm}{\setminus}
\newcommand{\ep}{\epsilon}
\newcommand{\no}{\noindent}
\newcommand{\Om}{\Omega}
\newcommand{\cal}{\mathcal}
\newcommand{\ti}{\tilde}
\newcommand{\la}{\lambda}
\newcommand{\tim}{\times}
\begin{document}
\title{Holomorphic mappings between domains in $\mbb C^2$}
\keywords{Reflection principle, Segre varieties}
\thanks{KV was supported in part by a grant from UGC under DSA-SAP, Phase IV; R.S. was supported 
in part by the Natural Sciences and Engineering Research Council of Canada}
\subjclass{Primary: 32H40 ; Secondary : 32H15}
\author{Rasul Shafikov, Kaushal Verma}
\address{Department of Mathematics, The University of Western Ontario,
London, Ontario N6A 5B7, Canada}
\email{shafikov@uwo.ca}
\address{Department of Mathematics,
Indian Institute of Science, Bangalore 560 012, India}
\email{kverma@math.iisc.ernet.in}


\maketitle

\setcounter{tocdepth}{1}
\tableofcontents

\section{Introduction}

\no The question of determining the boundary behaviour of a biholomorphic or more generally a proper 
holomorphic mapping between two given domains in $\mathbb C^n$ ($n > 1$) is well known. In particular, 
if $f : D \ra D'$ is a proper holomorphic mapping between smoothly bounded domains in $\mathbb C^n$, 
the conjecture that $f$ extends smoothly up to $\pa D$, the boundary of $D$, remains open in complete 
generality. We shall henceforth restrict ourselves to the case when the boundaries are smooth real analytic, 
where much progress has recently been made. The main result of this article is:

\begin{thm}
Let $D, D'$ be domains in $\mathbb C^2$, both possibly unbounded, and $f : D \ra D'$ a holomorphic mapping. 
Let $M \subset \pa D$ and $M'\subset \pa D'$ be open pieces, which are smooth real analytic and of finite type, 
and fix $p \in M$. Suppose there is a neighbourhood $U$ of $p$ in $\mathbb C^2$ such that the cluster set of 
$U \cap M$ does not intersect $D'$. Then $f$ extends holomorphically across $p$ if one of the following conditions 
holds:

\begin{enumerate}
\item[(i)] $p$ is a strongly pseudoconvex point, and the cluster set of $p$ contains a point in $M'$.
\item[(ii)] $M$ is pseudoconvex near $p$, and the cluster set of $p$ is bounded and contained in $M'$.
\item[(iii)] $D$ is bounded, $f : D \ra D'$ is proper, and the cluster set of $M$ is contained in $M'$.
\end{enumerate}
\end{thm}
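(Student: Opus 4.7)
The plan is to reduce all three cases to the reflection principle for real analytic finite type hypersurfaces in $\mbb C^2$. The key intermediate target in each case is to show, after shrinking $U$ around $p$, that $f$ extends continuously to $U\cap (D\cup M)$ and sends $U\cap M$ into $M'$; once this matching of $M$ into $M'$ is in hand, the Segre-variety machinery of Diederich--Pinchuk and Shafikov for finite type hypersurfaces in $\mbb C^2$ delivers a holomorphic extension across $p$. I would therefore fix local real analytic defining functions $\rho$ for $M$ at $p$ and $\rho'$ for $M'$ at a target cluster point, and work throughout with the associated Segre families $Q_w=\{z:\rho(z,\bar w)=0\}$ and $Q'_{w'}$.

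For case (i), the plan is to exploit strong pseudoconvexity at $p$. The hypothesis that the cluster set of $U\cap M$ avoids $D'$ together with a local holomorphic peak function for $D'$ at a cluster point of $p$ lying in $M'$ makes available a Pinchuk-style scaling/normal families argument, which produces a continuous extension of $f$ to a neighbourhood of $p$ in $\overline D$. The assumption that the cluster set of $p$ meets $M'$ pins this extension to land on $M'$ at $p$, and an openness argument yields $f(V\cap M)\subset M'$ for a smaller neighbourhood $V$, which is the input required by the reflection principle.

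Case (ii) is handled through Segre varieties directly, since strong pseudoconvexity is no longer available at $p$. Using pseudoconvexity and finite type near $p$ one obtains good local control of the Segre map $w\mapsto Q_w$. The hypothesis that the cluster set of $p$ is bounded and contained in $M'$ lets one track images of Segre varieties $Q_a$, for $a\in D$ close to $p$, and extract an invariance relation $f(Q_a\cap D)\subset Q'_{b(a)}$ for a suitable holomorphic selection $b(a)$; complexifying this relation and using finite type of $M'$ yields the required boundary matching. Case (iii) is the classical proper-map setting: properness together with the global containment $\overline{f(M)}\subset M'$ forces $f$ to extend as a proper holomorphic correspondence in a neighbourhood of $M$, and the finite type of $M'$ allows a single-valued branch selection near $p$, furnishing the holomorphic extension.

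The principal obstacle, uniformly across the three cases, is the passage from a purely topological cluster-set hypothesis to a coherent analytic relation between the Segre varieties of $M$ and those of $M'$. Case (i) resolves this analytically by scaling. In cases (ii) and (iii) one must propagate the invariance $f(Q_a)\subset Q'_{b(a)}$ through points where $M$ may be only weakly pseudoconvex or where the extension has multiple branches, and keeping this relation consistent under the finite-type degenerations of $M$ and $M'$ is where I expect the heart of the argument to lie.
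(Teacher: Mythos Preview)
Your plan has a genuine gap in case (i), and this gap propagates into (ii) and (iii). You propose to obtain continuous extension near $p$ via a ``Pinchuk-style scaling/normal families argument'' using ``a local holomorphic peak function for $D'$ at a cluster point of $p$ lying in $M'$.'' But such a peak (or even plurisubharmonic barrier) function exists at $p'\in M'$ only when $p'$ is a \emph{pseudoconvex} point of $M'$. Nothing in the hypotheses of case (i) prevents $p'$ from lying in $T'\setminus(M'^+\cup M'^-)$, the border between the pseudoconvex and pseudoconcave parts of $M'$, or in $M'^-$. At such points no local barrier is available, scaling fails, and you have no mechanism to force continuity of $f$ up to $M$ near $p$. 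The paper treats the case $p'\in M'^+$ exactly as you suggest (via \cite{Su} and \cite{PT}), rules out $p'\in M'^-$ by a maximum-principle argument with the pushed-forward barrier $\psi_p$, but then devotes the bulk of Section~4 to the remaining possibility $p'\in T'\setminus(M'^+\cup M'^-)$: this requires building the analytic set $X_f$ of \eqref{eq:X}, the auxiliary one-dimensional sets $C_j$ along $Q_{p^j}\times Q'_{p'^j}$, a delicate analysis of their cluster sets (Lemmas~4.3--4.9), and finally an analytic-continuation argument along $Q_p$ in the spirit of \cite{Sh}. None of this is visible in your outline.

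The same issue undermines your case (ii): you speak of extracting a holomorphic selection $b(a)$ with $f(Q_a\cap D)\subset Q'_{b(a)}$, but when $cl_f(p)$ sits inside the border $T'\setminus(M'^+\cup M'^-)$ there is no a priori reason for such a single-valued selection to exist or to extend across $M$. The paper instead shows that the limit points of $X_f$ on $U_1^+\times(U'\cap M')$ are confined to $\mathcal L\times(T'\setminus(M'^+\cup M'^-))$ for a specific Levi-foliated set $\mathcal L$ (Lemma~5.2), proves $\mathcal L$ has CR dimension one (Lemma~5.3), and removes these limit points via Chirka/Shiffman/Bishop-type theorems. In case (iii) your assertion that properness ``forces $f$ to extend as a proper holomorphic correspondence in a neighbourhood of $M$'' is precisely the conclusion one is trying to establish; the paper still has to run the $X_f$/$C_j$ machinery and invoke Lemma~6.1 to exclude strongly pseudoconvex cluster points. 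In short, your reduction to ``continuous extension $+$ reflection principle'' works only in the easy subcase $p'\in M'^+$ and does not touch the substantive part of the theorem.
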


\no The following examples that have been borrowed from \cite{DPJGA} and \cite{PT} show that holomorphic extendability of $f$ cannot be hoped for in the absence of the 
hypotheses considered above.

\medskip

\no {\it Example 1:} Let $D = \{z \in \mathbb C^2 : 2 \Re z_2 + \vert z_1 \vert^2 < 0\}$ and 
$D' = \{z \in \mathbb C^2 : 2 \Re (z'_2)^2 + \vert z'_1 \vert^2 < 0\}$. Note that $0 \in \pa D$ and 
$0' \in \pa D'$. Furthermore, $D \backsimeq \mathbb B^2$, the unit ball in $\mathbb C^2$, while $D'$ 
has two connected components none of which has smooth boundary near the origin. However $\pa D'$ is 
a real analytic set. Since $z_2 \not= 0$ in $D$, it is possible to choose a well defined branch of $\sqrt{z_2}$ 
in $D$, and having made such a choice let $f(z_1, z_2) = (z_1, \sqrt{z_2})$. Then $f$ is a biholomorphism 
between $D$ and a connected component say $D'_1$ of $D'$. Moreover $0' \in cl_f(0)$. But $f$ does not 
extend holomorphically across $0 \in \pa D$.

\medskip

\no {\it Example 2:} Let $D = \{z \in \mathbb C^2 : 2 \Re z_2 + \vert z_1 \vert^2 < 0\}$ and $D' = \{z \in \mathbb C^2 : 2 \Re z'_2 + \vert z'_1 \vert^2 
\vert z'_2 \vert^2 < 0\}$ and $f(z_1, z_2) = (z_1/z_2, z_2)$. Again $0 \in \pa D, 0' \in \pa D'$ and $f : D \ra D'$ is a biholomorphism. Both $\pa D, 
\pa D'$ are smooth real analytic but the curve $\z \mapsto (\z, 0), \z \in \mathbb C$, is contained in $\pa D'$. Consequently $\pa D'$ is not of finite 
type near $0'$. Choose a sequence of positive reals $\ep_j \ra 0$ and observe that $f(0, -\ep_j) = (0, -\ep_j)$. This shows that $0' \in cl_f(0)$ but 
$f$ evidently does not have a holomorphic extension across $0 \in \pa D$. Other examples of biholomorphisms from $D$ that are of a similar nature can be 
constructed by considering rational functions on $\mathbb C^2$ whose indeterminacy locus contains the origin.

\medskip

\no {\it Example 3:} Let $g(z)$ be an inner function on $\mathbb B^n$ ($n > 1$). Then $g(z)$ has unimodular boundary values almost everywhere on $\pa \mathbb 
B^n$ by Fatou's theorem. However it is known (see Proposition~19.1.3 in \cite{Ru} for example) that there is a dense $G_{\delta}$-subset of $\pa \mathbb
B^n$ such that the cluster set of any point from it must intersect the unit disc $\Delta \subset \mathbb C$. In fact the image of any radius in 
$\mathbb B^n$ that ends at a point on this $G_{\delta}$ under $g(z)$ is dense in $\Delta$. Therefore, $f(z) = (g(z), 0, \ldots, 0)$ is a 
holomorphic self map of $\mathbb B^n$ for which there does not exist a neighbourhood $U$ of any given boundary point with the property that the cluster set of $U \cap \pa 
\mathbb B^n$ does not intersect $\mathbb B^n$. Evidently $f$ fails to admit even a continuous extension to any $p \in \mathbb B^n$.

\medskip

\no To provide a context for this theorem, recall the theorems of Diederich-Pinchuk from \cite{DP1} and \cite{DP2}. In \cite{DP1} it was shown that  
every proper holomorphic mapping between smoothly bounded real analytic domains in $\mathbb C^2$ extends holomorphically across the boundaries while 
\cite{DP2} contains a similar statement for continuous CR mappings (that are a priori non-proper) between smooth real analytic finite type hypersurfaces 
in $\mathbb C^n$, $n > 2$. The theorem above shows that it is possible to study the boundary behaviour of $f$ under purely local hypotheses. Motivated in part by 
\cite{BerSu}, \cite{Su}, and \cite{ForLow}, which deal with similar local theorems under convexity assumptions on the boundaries
either of a geometric or a function theoretic nature, attempts to arrive at such a local statement were made in \cite{V} and \cite{SV}, both of which were proved
under additional hypotheses only on the mappings involved. It should be noted that local extension theorems for proper holomorphic mappings across pseudoconvex
real analytic boundaries played a particularly useful role in the global theorem of \cite{DP1}. Cases (i) and (ii) provide an instance of such extension theorems
even without properness of $f$. Case (i) in particular shows that $f$ can be completely localized near a strongly pseudoconvex point as soon as its cluster set contains 
a smooth real analytic finite type point. Thus, these statements cover the case of infinite sheeted coverings $f : D \ra D'$. Second, in (ii) no assumptions are being 
made about the cluster set of points on $M$ close to $p$. In particular, the possibility that the cluster set of a point $q \in M$ close to $p$ contains the point at infinity 
in $\pa D'$ is apriori allowed -- that this cannot happen will follow from the boundedness of the cluster set of $p$ and will be explained later. Finally, a word about the 
various hypotheses being made in (i), (ii) and (iii) about the relative location of $p \in M$ -- while pseudoconvex points are taken care of by the first two cases, 
the content of (iii) lies in the fact that it addresses the remaining possibility that $p$ is either on the border between the pseudoconvex and pseudoconcave points on 
$M$, or $M$ is locally pseudoconcave near $p$. However, it is well known that $f$ extends across pseudoconcave points. Therefore, it is sufficient to consider the case when 
$p$ is on the border between the pseudoconvex and pseudoconcave points on $M$. The proof of these theorems falls within the purview of the geometric reflection principle 
as developed in \cite{DW}, \cite{DF}, \cite{DP1} and \cite{DP2}. Further refinements of these techniques from \cite{SV} and the ideas of analytic continuation of germs of 
holomorphic mappings along paths on smooth real analytic hypersurfaces from \cite{Sh} are particularly useful.

\medskip

The geometric reflection principle as developed in \cite{DW}, \cite{DF}, \cite{DP1} studies the influence of a holomorphic mapping between two 
smoothly bounded real analytic domains on the Segre varieties that are associated to each boundary point. A quick review of what will be needed later 
is given below but detailed proofs that can be found in the aforementioned references have been skipped, the purpose of the exposition being solely 
to fix notation. For brevity we work in $\mathbb C^2$ as the case for $n > 2$ is no different and write $z = (z_1, z_2)$ for a point $z \in \mathbb C^2$. 
Let $\Om \subset \mathbb C^2$ be open and $M \subset \Om$ a closed, smooth real analytic real hypersurface. Pick $\z \in M$ and translate coordinates so 
that $\z = 0$. Let $r(z, \ov z)$ be the defining function of $M$ in a neighbourhood, say $U$ of the origin and suppose that $\pa r / \pa z_2 (0) 
\not= 0$. Let $U^{\pm} = \{z \in U: \pm r(z, \ov z) > 0\}$. If $U$ is small enough, the complexification $r(z, \ov w)$ of $r(z, \ov z)$ is well 
defined by means of a convergent power series in $U \times U$. It may be noted that $r(z, \ov w)$ is holomorphic in $z$ and anti-holomorphic in $w$. For 
any $w \in U$, the associated Segre variety is
\[
Q_w = \{z \in U : r(z, \ov w) = 0\}.
\]
By the implicit function theorem $Q_w$ can be written as a graph for each $w \in U$. In fact, it is possible to choose a pair of neighbourhoods 
$U_1, U_2$ of the origin with $U_1$ compactly contained in $U_2$ such that for any $w  \in U_1$, $Q_w \subset U_2$ is a closed complex 
hypersurface and 
\[
Q_w = \{z = (z_1, z_2) \in U_2 : z_2 = h(z_1, \ov w)\},
\]
where $h(z_1, \ov w)$ depends holomorphically on $z$ and anti-holomorphically on $w$. Such neighbourhoods are usually called a {\it standard pair} of 
neighbourhoods and for convenience they will be chosen to be polydiscs around the origin. $Q_w$ as a set is independent of the choice of $r(z, \ov 
z)$ because any two local defining functions for $M$ near the origin differ by a non-vanishing multiplicative factor, and this persists upon 
complexification as well. A similar argument shows that Segre varieties are invariantly attached to $M$ in the following sense: let $M'$ be another smooth 
real analytic hypersurface in $\mathbb C^2$. Pick $p, p'$ on $M, M'$ respectively and open neighbourhoods $U_p, U'_{p'}$ containing them. If $f : U_p \ra 
U'_{p'}$ is a holomorphic mapping such that $f(U_p \cap M) \subset U'_{p'} \cap M'$ then for all $w \in U_p$, $f(Q_w) \subset Q'_{f(w)}$ where $Q'_{f(w)}$ 
denotes the Segre variety associated to $f(w) \in U'_{p'}$. This will be referred to as the invariance property of Segre varieties. It forms the basis 
of constructing the complex analytic set that is used to extend holomorphic mappings across real analytic boundaries. The practice of distinguishing 
analogous objects in the target space by adding a prime will be followed in the sequel. For $\z \in Q_w$ the germ of $Q_w$ at $\z$ will be denoted by 
${}_{\z}Q_w$. Let $\cal S = \{Q_w : w \in U_1\}$ be the ensemble of all Segre varieties and let $\la : w \mapsto Q_w$ be the so-called Segre map. Then 
$\cal S$ admits the structure of a complex analytic set in a finite dimensional complex manifold. The fibres
\[
I_w = \la^{-1}(\la (w)) = \{z : Q_z = Q_w\}
\]
are then analytic subsets of $U_1$ and for $w \in M$, it can be shown that $I_w \subset M$. If $M$ does not contain germs of positive dimensional 
complex analytic sets, i.e., it is of finite type in the sense of D'Angelo, it follows that $I_w$ is a finite collection of points. Other notions of finite 
type, such as in the sense of Bloom-Graham or essential finiteness, are discussed in more detail in \cite{BER}; in $\mathbb C^2$ however all these notions are 
equivalent. Assuming now that $M$ is of finite type it follows that $\la$ is proper in a neighbourhood of each point on $M$. Also note that $z \in Q_w$ is 
equivalent to $w \in Q_z$ and that $z \in Q_z$ iff $z \in M$, both of which are consequences of the reality of $r(z, \ov z)$. The notion of a symmetric 
point from \cite{DP1} will be useful here as well: for a fixed $w \in U_1$, the complex line $l_w$ containing the real line through 
$w$ and orthogonal to $M$ intersects $Q_w$ at a unique point. This is the symmetric point of $w$ and will be denoted by ${}^sw$. If $w \in M$ then $w = 
{}^sw$ and it can be checked that for $w \in U^{\pm}$, the symmetric point ${}^sw \in U^{\mp}$. Finally if $w \in U^+$, the component of $Q_w \cap U^-$ that contains ${}^sw$ will be denoted by $Q^c_w$ and referred to as the canonical component.


\section{Remarks on the proof of the theorem}

\no By shrinking the neighbourhood $U$ of $p$ whose existence is assumed in the theorem, we may suppose  
that $U \cap M$ is a smooth real analytic hypersurface of finite type. The standard pair of neighbourhoods 
$U_1 \subset U_2$ of $p$, needed to define the Segre varieties associated to points on $U \cap \pa D$ near $p$, 
will then be chosen to be compactly contained in $U$. Let $p'$ be a point on $M'$ that lies in $cl_f(p)$,
the cluster set of $p$. Likewise, fix a neighbourhood $U'$ of $p'$ so that $U' \cap \pa M'$ is again a smooth real 
analytic hypersurface of finite type and then fix a standard pair $U'_1 \subset U'_2$ around $p'$. Abusing notation, 
we shall denote $U \cap M$ and $U' \cap \pa M'$ by $M, M'$ respectively. The following stratification of $M$ from \cite{DP1} will be needed: 
let $T$ be the set of points on $M$ where its Levi form vanishes. Then $T$ admits a semianalytic stratification as $T = T_0 \cup T_1 \cup T_2$ where $T_k$ 
is a locally finite union of smooth real analytic submanifolds of dimension $k = 0, 1, 2$ respectively. Denote by $M^{\pm}_s$ the set of strongly pseudoconvex 
(resp. strongly pseudoconcave) points on $M$. Let $M^{\pm}$ be the relative interior, taken with respect to the relative topology on $M$, of the closure of $M^{\pm}_s$. 
Then $M^{\pm}$ is the set of weakly pseudoconvex (resp. weakly pseudoconcave) points of $M$ and the border $M \sm (M^+ \cup M^-) \subset T$ separates $M^+$ and $M^-$. 
It was shown in \cite{DFY} and \cite{DP1} that this stratification of $T$ can be refined in such a way that the two dimensional strata become maximally totally real manifolds. 
Retaining the same notation $T_k, k = 0, 1, 2$ for the various strata in the refined stratification, let $T^+_k = M^+ \cap T_k$ for all $k$. Then $T^+_2$ 
is the maximally totally real strata near which $M$ is weakly pseudoconvex. It was shown in \cite{DFor} that
\[
(M \sm (M^+ \cup M^-)) \cap T_2 \subset M \cap \hat D
\]
where $\hat D$ is the holomorphic hull of the domain $D$. Evidently $f$ holomorphically extends to a neighbourhood of each point on $(M \sm (M^+ \cup 
M^-)) \cap T_2$. Its complement in $T$ is 
\[
M_e = (M \sm (M^+ \cup M^-)) \cap (T_1 \cup T_0)
\]
which will be called the exceptional set. Observe that each of $M_e$ and $T^+_1 \cup T^+_0$ is a locally finite union of real analytic arcs and 
points. The point $p$ can then lie in either $M^+_s, T^+_2 \cup T^+_1 \cup T^+_0, M_e$ or $\big( M \sm (M^+ \cup M^-) \big) \cap T_2$ and the same 
possibilities hold for $p'$ by considering the corresponding strata on $M'$.

\medskip

The first thing to prove in cases (i) and (ii) of the theorem is that the restriction $f : U^- \ra D'$ has discrete fibres. This will guarantee that the set defined by
\begin{equation}\label{eq:X}
X_f = \big\{ (w, w') \in U^+_1 \tim U'^+_1 : f(Q_w \cap D) \supset {}_{{}^sw'}Q'_{w'} \big\},
\end{equation}
if non-empty, is a locally complex analytic set. In the setting of cases (i) and (ii), there are negative plurisubharmonic barriers at boundary points near $p$
(see the next section for details) and thus by \cite{Su}, $f$ is known to have discrete fibres. 
In Proposition 3.2 we show this near any smooth finite type boundary point.
Let $\pi, \pi'$ denote the projections from $U \tim U'$ onto the first and second factor 
respectively. Since $\la$, $\la'$ are proper near $p$, $p'$ respectively, the fact that $f$ has discrete fibres near $p$ forces both projections $\pi : X_f \ra 
U^+_1$ and $\pi' : X_f \ra U'^+_1$ to have discrete fibres as well. Now no assumptions on the global cluster set of $p$ are being made and hence it is not 
possible to directly prove that $X_f$ is contained in a closed analytic subset of $U_1 \tim U'_1$ with proper projection onto $U_1$. If this were 
possible, $f$ would then extend as a holomorphic correspondence and hence as a mapping by Theorem 7.4 of \cite{DP1}. 

\medskip

To illustrate the salient features of the proof of case (i) note that $p \in M^+_s$ and that $p' \in M'^+, T'$ or $M'^-$. 
The case when $p' \in M'^+$ is well understood for 
there are local plurisubharmonic peak functions near $p, p'$ which lead to the continuity of $f$ near $p$. When $p' \in M'^-_s$, the local plurisubharmonic peak 
function near $p$ (which can be continuously extended to all of $D$) can be pushed forward by $f$ to get a negative plurisubharmonic function on $D'$. The strong 
pseudoconcavity of $M'$ near $p'$ implies the existence of complex discs in $D'$ near $p'$ whose boundaries are uniformly compactly contained in $D'$. The restriction 
of this negative plurisubharmonic function on $D'$ to these discs is shown to violate the maximum principle. When $p' \in M'^-$ the graph of $f$ is shown to extend to 
a neighbourhood of $(p, p')$ as an analytic set and hence as a holomorphic mapping by \cite{DPJGA}, and this leads to a contradiction. The case when $p' \in \big( M' \sm 
(M'^+ \cup M'^-) \big) \cap T'_2$ requires several intermediate steps -- indeed, as noted above, such a $p' \in \hat D'$ and if $f : D \ra D'$ was proper then all points in 
the cluster set of $p'$ under the correspondence $f^{-1}$ on $\pa D$ would belong to $\hat D$ (see Lemma 3.1 in \cite{DP1}), and hence $f$ would extend holomorphically past 
$p$. This reasoning does not apply here for $f^{-1}$ is not known to be well defined even as a correspondence.
Therefore, we first show that there is a sequence $p^j \in U \cap M$ such that $f$ extends holomorphically 
across $p^j$ and $f(p^j) \ra p'$. For each $j$, consider the maximal possible extension of $f$ along $Q_{p^j}$ (cf. see (4.4)) which associates to each $p^j$ a one 
dimensional analytic set $C_j$ in a natural way. Secondly, it is shown that the cluster set of $\{ C_j \}$ (which is defined in the next section) contains points near 
which $X_f$ is defined. In particular, there are points on $Q_p$ over which $X_f$ is a local ramified cover. The proof of the extendability of $f$ is then completed 
by using ideas of analytic continuation of germs of holomorphic mappings along real hypersurfaces from \cite{Sh}. Similar ideas apply when $p' \in \big( M' \sm (M'^+ 
\cup M'^-) \big) \cap (T'_1 \cup T'_0)$ and the conclusion is that $f$ holomorphically extends across $p$ even when $p' \in T'$. This evidently leads to a 
contradiction since there are points near $p, p'$ at which $f$ is locally biholomorphic but the Levi form is not preserved. Thus if $p \in M^+_s$, then $p'$ is forced 
to be in $M'^+$ and consequently $f$ extends across $p$. In fact by \cite{CPS}, $p'$ must be a strongly pseudoconvex point on $M'$. 

\medskip

\no For case (ii), note that $p \in M^+, T$ or $M^-$. When $p \in M^+_s$ case (i) shows that $f$ extends past $p$. Note that points on $M^-$ and $(M \sm (M^+ \cup 
M^-)) \cap T_2$ belong to $\hat D$ and $f$ extends past $p$ in these cases as well. What remains to consider are the cases when $p \in T^+_2$ or $(M\sm (M^+ \cup M^-)) \cap (T_1 
\cup T_0)$, i.e., points on the one and zero dimensional strata of the border between the pseudoconvex and pseudoconcave points. Suppose that $p \in T^+_2$ and let
\[
\cal L = \bigcup_{w \in T^+_2} Q_w
\]
where $w$ is allowed to vary near $p$ on $T^+_2$. Let $U' \subset \mbb C^2$ be an open neighbourhood that contains $cl_f(p)$ such that $U' \cap M'$ is a closed smooth real 
analytic hypersurface of finite type. Then $X_f \subset U^+_1 \times U'^+$ is well defined. The goal will be to show that $X_f$ extends as an analytic set in $U_1 \times 
U'$ and the obstructions in doing this arise from the limit points of $X_f$ on $U^+_1 \times (U'\cap M')$. The limit points are shown to lie on $\cal L \times (U'\cap M')$. 
The extendability of $X_f$ will follow by first showing that $\ov X_f$ is analytic near $\cal L \times T'^-_2$ while the other limit points on $\cal L \times ((M \sm (M^+ 
\cup M^-)) \cap (T_1 \cup T_0))$ (which is pluripolar) are removable by Bishop's theorem. Thus the graph of $f$ will be contained in an analytic set defined near $\{p\} 
\times (U'\cap M')$ and this will imply that $f$ holomorphically extends past $p$. 

\medskip

A set of similar ideas work when $p \in (M \sm (M^+ \cup M^-)) \cap (T_1 \cup T_0)$ and can 
be applied to prove case (iii). An essential ingredient in this is Lemma 6.1 according to which the 
cluster set of such a $p$ cannot contain strongly pseudoconvex points on $M'$ -- and this is shown 
to hold even without $f$ being proper. This exhausts all possibilities for $p$ -- indeed, as noted above, 
if $p$ is on a two dimensional stratum on the border, $p \in \hat D$ and hence $f$ extends 
holomorphically across $p$.


\section{The fibres of $f$ near $p$ are discrete}

\no In Section 6 of \cite{DP2}, Diederich-Pinchuk pose a conjecture on the cluster set of a sequence of pure dimensional analytic sets all of which are defined 
in a fixed neighbourhood of a real analytic CR manifold in $\mathbb C^n$ of finite type in the sense of D'Angelo. To recall the set up, let $W \subset \mathbb C^n$ 
be open and $N \subset W$ a relatively closed, real analytic CR manifold of finite type. Suppose that $A_j \subset W$ is a sequence of closed analytic sets of 
pure fixed dimension $k \ge 1$. Define the cluster set of $\{A_j\}$ as
\[
{\rm cl}(A_j) = \big\{ z \in W : {\rm there \; is \; a \; sequence} \; z_j \in A_j \; {\rm such \; that} \; z \; {\rm is \; a \; limit \; point \; of} 
\; (z_j) \big\}.
\]
Their conjecture is that ${\rm cl}(A_j)$ is not entirely contained in $N$. Although open in general, proofs of the validity of this conjecture were given by 
them in \cite{DP2} when $n = 2$ and $N$ is a smooth real analytic hypersurface of finite type and in other instance (cf. Propositions~8.2 and~8.3 in 
\cite{DP2}) when more information is apriori assumed either about $k$ or about the structure of ${\rm cl}(A_j)$. Their proof of this conjecture in case $n = 2$ 
depends on the existence of suitable plurisubharmonic peak functions at the pseudoconvex points and along the smooth totally real strata of the Levi 
degenerate points on $N$. A different argument for the case of the totally real strata, based on the well known fact that an analytic set of positive 
dimension cannot approach such a submanifold tangentially can be given as follows.

\begin{prop}
Let $M \subset U \subset \mathbb C^2$ be a closed, smooth real analytic hypersurface of finite type and suppose that $A_j \subset U$ is a sequence of closed 
analytic sets of pure dimension $1$. Then ${\rm cl}(A_j)$ cannot be entirely contained in $M$.
\end{prop}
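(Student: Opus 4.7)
The plan is to argue by contradiction: assume ${\rm cl}(A_j) \subset M$ and pick $q \in {\rm cl}(A_j)$. By the stratification of $M$ reviewed in Section~2, the point $q$ lies in $M^+ \cup M^-$ (the strongly or weakly pseudoconvex or pseudoconcave part) or in the Levi-degenerate set $T = T_0 \cup T_1 \cup T_2$, with $T_2$ maximally totally real. I would rule out each possibility in turn.

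For $q \in M^+ \cup M^-$ I would reproduce the Diederich--Pinchuk argument from \cite{DP2}. Finite type of $M$ in $\mbb C^2$ yields a local plurisubharmonic peak function $\psi$ at $q$ on the pseudoconvex (resp. pseudoconcave) side of $M$ in a neighborhood $V$, with $\psi(q) = 0$ and $\psi < 0$ elsewhere on the closure of that side of $V$. For each large $j$, I would split $A_j \cap V$ into its one-sided branches with respect to $M$. On the branch lying on the peak-function side, $\psi|_{A_j}$ is subharmonic. The hypothesis ${\rm cl}(A_j) \subset M$ forces the boundary values of $\psi|_{A_j}$ to tend to zero along the approach to $V \cap M$; meanwhile, the peak property forces $\psi$ close to $\psi(q) = 0$ only near $q$ inside $V$, which together with closure of $A_j$ in $U$ and the maximum principle gives a contradiction.

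For $q \in T_2$ I would use the new, totally real argument. Since $T_2$ is maximally totally real in $\mbb C^2$, after a local holomorphic change of coordinates I would take $T_2$ to be (an open piece of) $\mbb R^2 \subset \mbb C^2$ near $q$; then the function $\phi(z) = |\mathrm{Im}\, z|^2$ is nonnegative and strictly plurisubharmonic in a neighborhood $V$ of $q$, with $\phi^{-1}(0) \cap V = T_2 \cap V$. By hypothesis, every sequence $w_j \in A_j \cap V$ accumulating at a point of $V \cap M$ in fact accumulates in $T_2 \cap V$, so $\phi(w_j) \to 0$. Restricting $\phi$ to the $1$-dimensional analytic set $A_j \cap V$ produces a nonnegative subharmonic function which cannot vanish identically on any positive-dimensional complex subvariety, by strict plurisubharmonicity. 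With $V$ chosen so that $\phi|_{A_j \cap V}$ has arbitrarily small values along both $\pa V$ and the approach to $V \cap M$ (using closure of $A_j$ in $U$), the function $\phi|_{A_j \cap V}$ would be forced to attain a positive interior maximum, contradicting the maximum principle; this is a precise manifestation of the ``no tangential approach'' of analytic sets to totally real submanifolds.

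The main obstacle is executing the maximum principle rigorously in the $T_2$ case: one must control the values of $\phi|_{A_j \cap V}$ on all of $\pa(A_j \cap V)$ --- both on $\pa V$ and along the approach to $V \cap M$ --- so that $\phi|_{A_j \cap V}$ has a genuine interior maximum rather than a spurious one on $\pa V$. Finally, points $q \in T_0 \cup T_1$ (a set of real dimension at most $1$ in $\mbb C^2$) can be handled by reduction: those in the interior of $M^+ \cup M^-$ fall under the peak-function case, while those on the border can be treated by a Bishop-type compactness argument which would produce a $1$-complex-dimensional limit analytic set inside ${\rm cl}(A_j) \subset M$, contradicting finite type of $M$.
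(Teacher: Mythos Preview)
Your $T_2$ argument has a genuine gap: the maximum principle runs the wrong way. Once you know that $\sup_{A_j\cap\partial V}\phi\to 0$ (which is correct, since the cluster set on $\partial V$ lies in $T_2$), the maximum principle for the subharmonic function $\phi|_{A_j\cap V}$ only gives $\sup_{A_j\cap V}\phi\to 0$. That is perfectly consistent with $\phi\ge 0$ and $\phi\not\equiv 0$ on $A_j$; there is no interior maximum and no contradiction. Nothing in your setup produces a uniform positive lower bound for $\phi$ at an interior point of $A_j\cap V$ (note $\phi(a_j)\to 0$ as well).

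The paper supplies exactly the missing quantitative ingredient. Instead of $\phi=\tau(z)=2(\Im z_1)^2+2(\Im z_2)^2$ it uses
\[
\rho_j(z)=\tau(z)-\tfrac12\,|z-a_j|^2,
\]
which is still strictly plurisubharmonic since $i\partial\bar\partial\tau=i\partial\bar\partial|z|^2$. One has $\rho_j(a_j)=\tau(a_j)>0$ (after moving $a_j$ off $T_2$, which is possible since $A_j\cap T_2$ is discrete), while if the component $\tilde A_j$ stayed in the tube $\{\tau<r_0\}$ with $2r_0<\eta^2$ then $\rho_j<0$ on $\partial\tilde A_j\subset\partial\tilde U$. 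The maximum principle now gives a genuine contradiction, forcing $\tilde A_j$ to exit the tube; the resulting cluster points land in $M^+_s$, contradicting the first step. This subtraction trick is the heart of the ``no tangential approach'' statement you invoke, and it is absent from your argument.

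Two smaller points. First, your handling of $M^\pm$ is also off: there is no ``approach to $V\cap M$'' along which boundary values of $\psi|_{A_j}$ are taken, because each $A_j$ is already closed in $U$; the relative boundary of the relevant component lies on $\partial V$, and the peak function (defined on a full neighbourhood and peaking at $q$ relative to $M$) is uniformly bounded away from its peak value there. Second, your proposed treatment of $T_1\cup T_0$ via a Bishop-type compactness argument requires local volume bounds on the $A_j$ that are not given; the paper instead just repeats the same $\rho_j$ argument for these lower-dimensional (and in particular totally real) strata.
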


\begin{proof}
Suppose that ${\rm cl}(A_j) \subset M$. If possible, pick $a \in {\rm cl}(A_j) \cap M^+_s$. Let $\ti U$ be an open neighbourhood of $a$ chosen so small that $\ti 
U \cap M \subset M^+_s$ and such that there is a continuous plurisubharmonic function $\phi$ on $\ti U$ with $\phi(z) < \phi(a)$ for all 
$z \in (\ti U \cap M) \sm \{a\}$. Since $a \in {\rm cl}(A_j)$ it follows that $A_j \cap \ti U \not= \emptyset$ for infinitely many $j$, but note however that 
$A_j \cap \ti U$ may have many components for each such 
$j$. For brevity this subsequence will still be denoted by $j$. Choose $a_j \in A_j \cap \ti U$ such that $a_j \ra a$. Let $\ti A_j$ be that component of $A_j \cap 
\ti U$ which contains $a_j$. As $A_j \subset U$ are closed it follows that $\pa \ti A_j \subset \pa \ti U$ and hence ${\rm cl}(A_j) \cap \pa \ti U \not= 
\emptyset$. Now
\[
\sup\,\{ \phi(z) : z \in {\rm cl}(A_j) \cap \pa \ti U \} < \phi(a),
\]
and hence by continuity of $\phi$ there is an open neighbourhood $V$ of ${\rm cl}(A_j) \cap \pa \ti U$ such that
\[
\sup\,\{ \phi(z) : z \in V \} = c < \phi(a).
\]
Having fixed $V$ choose an open neighbourhood $\ti V$ of $a$ such that $c < \phi(z)$ for all $z \in \ti V$. It follows that $\phi$ restricted to $\ti A_j$ attains its 
maximum in $\ti V$ which is a contradiction. Exactly the same arguments can be applied to points in ${\rm cl}(A_j) \cap M^-_s$ if any. This shows that 
${\rm cl}(A_j) \subset M \sm (M^+_s \cup M^-_s)$.

\medskip

Now pick $a \in {\rm cl}(A_j) \cap T^+_2$, if possible, and choose coordinates around $a = 0$ so that $T^+_2$ coincides with the $2$--plane spanned by $\Re z_1, \Re 
z_2$ near the origin and fix a polydisc $\ti U = \{ \vert z_1 \vert < \eta, \vert z_2 \vert < \eta \}$ around the origin with $\eta > 0$ small enough so that $\ti 
U \cap T^+_2 = \{ \Im z_1 = \Im z_2 = 0\}$. Then $\tau(z) = 2 (\Im z_1)^2 + 2 (\Im z_2)^2$ is a non-negative strongly plurisubharmonic function in $\ti U$ whose 
zero locus is exactly $\ti U \cap T^+_2$. Also note that $i \pa \ov \pa \tau = i \pa \ov \pa \vert z \vert^2$. For each $r > 0$ the domain $V_r = \{ z \in \ti U : 
\tau(z) < r \}$ is a strongly pseudoconvex tubular neighbourhood of $\ti U \cap T^+_2$. As before choose $a_j \in A_j$ converging to $a$, and by shifting, it if 
necessary, we may assume that $a_j \in A_j \sm (\ti U \cap T^+_2)$. Let $\ti A_j$ be that component of $A_j \cap \ti U$ which contains $a_j$. Fix $0 < r_0 \ll \eta$ 
whose precise value will be determined later. Then only finitely many $\ti A_j$ can be contained in $V_{r_0}$. Indeed if this does not hold pass to a subsequence, 
still retaining the same index for brevity, if necessary for which $\ti A_j \subset V_{r_0}$. Define
\[
\rho_j(z) = \tau(z) - \vert z - a_j \vert^2 / 2,
\]
and note that $\rho(a_j) = \tau(a_j) > 0$ for each $j$. Moreover,
\[
i \pa \ov \pa \rho_j = i \pa \ov \pa \tau - i \pa \ov \pa \vert z - a_j \vert^2 / 2 = i \pa \ov \pa \vert z \vert^2 - i \pa \ov \pa \vert z \vert^2 / 2 = i \pa \ov 
\pa \vert z \vert^2 > 0
\]
shows that the restriction of $\rho_j$ to $\ti A_j$ is subharmonic for all $j$. Now fix $j$ and let $w \in \pa \ti A_j \subset \pa \ti U$. Then
\[
\rho_j(w) = \tau(w) - \vert w - a_j \vert^2 / 2 \le r_0 - \vert w - a_j \vert^2 / 2 < 0
\]
the last inequality holding whenever $r_0 > 0$ is chosen to satisfy
\[
2 r_0 < \eta^2 \approx (\eta - \vert a_j \vert)^2 \le (\vert z \vert - \vert a_j \vert)^2 \le \vert z - a_j \vert^2
\]
for all $z \in \pa \ti U$. This contradicts the maximum principle and hence all but finitely many $\ti A_j$ must intersect $\ti U \cap \{ \tau(z) \ge r_0 \}$. 
Note, however, that this set contains strongly pseudoconvex points and hence it follows that ${\rm cl}(A_j) \cap M^+_s \not= \emptyset$, which is a contradiction to 
the previous step. The same reasoning can be applied to show that ${\rm cl}(A_j)$ does not lie entirely in $T^-_2 \cup T^{\pm}_1 \cup T^{\pm}_0$. What remains is 
the border $M \sm (M^+ \cup M^-)$ which as discussed earlier admits a semi-analytic stratification into real analytic submanifolds of dimension $2, 1, 0$. The top 
dimensional strata can be made maximally totally real after a possible refinement. The same arguments can be repeated for these strata to get a contradiction.
\end{proof}

\begin{prop}
Let $D, D'$ be domains in $\mbb C^2$, both possibly unbounded and $f : D \ra D'$ a non-constant holomorphic mapping. Suppose that $M \subset \pa D$ is an open, smooth real 
analytic hypersurface of finite type, and let $p \in M$. Let $U$ be a neighbourhood of $p$ in $\mbb C^2$ such that the cluster set of no point on $U \cap M$ intersects $D'$. 
Then there exists a possibly smaller neighbourhood $V$ of $p$ such that $f : V^- \ra D'$ has discrete fibres.
\end{prop}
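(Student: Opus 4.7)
The plan is to argue by contradiction, combining Proposition 3.1 with the continuity of $f$. First shrink $U$ so that $U \cap \partial D \subset M$. Assume that for every neighborhood $V \subset U$ of $p$, the restriction $f : V^- \to D'$ has a non-discrete fibre. Taking $V_k = B(p, 1/k)$, for each large $k$ we obtain $w'_k \in D'$, a one-dimensional irreducible component $E_k$ of the global fibre $f^{-1}(w'_k)$ in $D$, and a point $z_k \in E_k \cap V_k^-$, so that $z_k \to p$ and $f(z_k) = w'_k$.

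The key technical step is to verify that each $E_k \cap U$ is a closed, pure one-dimensional analytic subset of $U$. Analyticity and pure-dimensionality are inherited from $E_k \subset D$, and $E_k \cap U$ is automatically closed in $U \cap D$. Any further limit point $q$ in $U$ would have to lie in $U \setminus (U \cap D) = U \cap \partial D \subset U \cap M$, and a sequence $y_m \in E_k$ approaching $q$ would give $f(y_m) = w'_k \in D'$, placing $w'_k \in {\rm cl}_f(q) \cap D'$ and directly violating the hypothesis. Hence $E_k \cap U$ is closed in $U$, and Proposition 3.1 applies to the sequence $\{E_k \cap U\}_k$.

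The main obstacle is that the $E_k$ could in principle extend throughout $U^-$, so the cluster set $C = {\rm cl}(\{E_k \cap U\})$ need not be confined to a small neighborhood of $p$. However, continuity of $f$ rules out interior cluster points: if $a \in C \cap (U \cap D)$ and $a_k \in E_k \cap U$ with $a_k \to a$, then $w'_k = f(a_k) \to f(a) \in D'$ by continuity, and combined with $z_k \to p$ and $f(z_k) = w'_k \to f(a)$ this places $f(a) \in {\rm cl}_f(p) \cap D'$, contradicting the hypothesis. (In the degenerate case where $\{E_k\}$ collapses to a single repeated component $E^*$, any point of $E^* \cap U$ already produces such an $a$ and the continuity argument closes the case by itself.) Otherwise $C$ is disjoint from $U \cap D$, forcing $C \subset U \cap \partial D \subset M$; Proposition 3.1 then delivers the required contradiction by asserting that $C$ cannot be entirely contained in $M$.
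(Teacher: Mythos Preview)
Your argument is correct and follows essentially the same route as the paper: one assumes the fibres are non-discrete along a sequence approaching $p$, obtains closed pure one-dimensional analytic sets $A_j \subset U$ from the corresponding fibre components (closedness coming from the cluster-set hypothesis on $U\cap M$), and then combines Proposition~3.1 with the continuity of $f$ to reach a contradiction. The only cosmetic difference is the order of the final step: the paper first invokes Proposition~3.1 to produce an interior cluster point $\zeta_0\in U^-$ and then derives $f(\zeta_0)\in cl_f(p)\cap D'$, whereas you first rule out interior cluster points and then contradict Proposition~3.1 --- logically the same.
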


\begin{proof}
First observe that for each $z \in D$, the analytic set $f^{-1}(f(z))$ is at most one dimensional as otherwise the uniqueness theorem will imply that $f$ is a 
constant mapping. Now if the assertion does not hold then there is a sequence $p_j \ra p$ for 
which $f^{-1}(f(p_j))$ is one dimensional at $p_j$. Let $A_j$ 
be a pure one dimensional component of $f^{-1}(f(p_j)) \cap U^-$ that contains $p_j$. Since the cluster set of no point on $U \cap M$ intersects $D'$ it follows that 
$A_j \subset U$ is a closed analytic set and hence Proposition~3.1 implies that ${\rm cl}(A_j)$ is not entirely contained in $M$. Pick $\z_0 \in {\rm cl}(A_j) \cap U^-$ and
choose $\z_j \in A_j$ converging to $\z_0$. Then $f(p_j) = f(\z_j) \ra f(\z_0) \in D'$. On the other hand, note that since $cl_f(p) \cap D' = \emptyset$, it follows that 
either $\vert f(p_j) \vert \ra + \infty$ or $f(p_j)$ clusters only at a finite boundary point of $D'$. This is a contradiction.
\end{proof}

\no Now suppose that $p \in M^+$. Then there exist (see, for example, \cite{S} and \cite{FS}) constants $\alpha, \beta > 0$ and an open neighbourhood $V$ of 
$p$ such that for every $\z \in V \cap \pa D$ there exists a plurisubharmonic function $\phi_{\z}$ on $V^-$ that is continuous on $V \cap \ov D$ satisfying
\begin{equation}
- \vert z - \z \vert^{\alpha} \lesssim \phi_{\z}(z) \lesssim - \vert z - \z \vert^{\beta}
\end{equation}
for any $z \in V \cap \ov D$. Here $(\alpha, \beta) = (1, 2)$ or $(1, 2m)$ depending on whether $p$ is strongly pseudoconvex or just weakly so, and in the latter case
$2m$ is the type of $\pa D$ at $p$. Moreover, the constants involved in these estimates are independent of $\z \in V \cap \pa D$. Thus $\phi_{\z}$ is a family of local 
plurisubharmonic barriers at $\z \in V \cap \pa D$ all of which are defined in a fixed neighbourhood of $p$. It follows from (3.1) that there are small neighbourhoods 
$V_2 \subset V_1$ of $p$ with $V_2$ compactly contained in $V_1$ and $\tau > 0$ and a smooth non-decreasing convex function $\theta$ with $\theta(t) = - \tau$ for $t \le 
- \tau$ and $\theta(t) = t$ for $t \ge -\tau / 2$ such that $\rho_p(z) = \tau^{-1} \theta(\phi_p(z)) : D \ra [-1, 0)$ is a negative continuous plurisubharmonic function on 
$D$ with $\rho_p(z) = -1$ on $D \sm V_1$ and $\rho_p(z) = \tau^{-1} \phi_p(z)$ on $V_2^-$. Since $f$ has discrete fibres near $p$ by Proposition 3.2, we may define
\[
\psi_p(z') =
\begin{cases}
  \sup\{ \rho_p(z) : z \in f^{-1}(z') \}, &\text{for $z \in f(V_1^-)$;} \\
  -1, &\text{for $z' \in D' \sm f(V_1^-)$.}
\end{cases}
\]
Arguments similar to those in \cite{Su} and \cite{ForLow} show that $\psi_p(z')$ is a negative, continuous plurisubharmonic function on $D'$. Furthermore there is an open 
neighbourhood $U'$ of $p'$ small enough so that $U' \cap M'$ is smooth real analytic such that
\begin{equation}
{\rm dist}(f(z), U' \cap M') \lesssim {\rm dist}(z, U \cap M)
\end{equation}
whenever $z \in U^-$ and $f(z) \in U'^-$. Now since $p' \in cl_f(p)$ there is a sequence $p_j \ra p$ such that $f(p_j) \ra p'$. While not much can be said at this 
stage about the continuity of $\psi_p(z')$ at $p'$, it does however follow from the definition of $\psi_p(z')$ that $\psi_p(f(p_j)) \ra 0$. This observation will be used 
in the sequel.


\section{Proof of Theorem 1.1 -- Case ($i$)}

\no In this section $p$ will be a strongly pseudoconvex point, i.e., $p \in M^+_s$, and separate cases will be 
considered depending on whether $p' \in M'^+$, $T'$, or $M'^-$ for $p' \in cl_f(p)$.

\subsection{The case when $\mathbf{ p'\in  M'^+\cup M'^-}$}
If $p' \in M'^+$, then by a theorem of Sukhov \cite{Su}, the map $f$ admits a H\"older continuous extension to a neighbourhood of $p$ on $M$. 
Further, by the result of Pinchuk-Tsyganov \cite{PT}, $f$ extends holomorphically, in fact, locally biholomorphically, across $p$. 

\medskip

Next, suppose that $p' \in M'^-_s$. Let $p_j$ be a sequence of points in $D$ such that $p_j \to p$, and $p'_j = f(p_j) \to p'$. Fix a small ball
$V' \subset \mbb C^2$ around $p'$ in which $M'$ is strictly pseudoconcave. For each $p'_j$, let $\z'_j \in M' \cap V'$ be the unique point
such that
\[
{\rm dist}(p'_j, M' \cap V') = \vert \z'_j - p'_j \vert.
\]
Let $L'_j \subset V'$ be the complex line through $p'_j$ which is obtained by translating the complex tangent space to $M'$ at $\z'_j$. Then $L'_j \ra L' \subset V'$ 
which is the complex tangent space to $M'$ at $p'$. Since $M'$ is strictly pseudoconcave at $p'$, it follows that $L' \cap \pa V' \Subset D'$ and hence that
$L'_j \cap \pa V'$ is uniformly compactly contained in $D'$ for all large $j$. Let $\phi_j : \ov \Delta \ra L_j$ be a holomorphic parametrization, which is continuous
on $\ov \Delta$ and satisfies $\phi_j(0) = p'_j$ and $\phi_j(\pa \Delta) = L'_j \cap \pa V'$. The sub-mean value property shows that
\[
\psi_p(p'_j) = \psi_p \circ \phi_j(0) \lesssim \int_{\pa \Delta} \psi_p \circ \phi_j
\]
for each $j$. Note that $\psi_p(p'_j) \ra 0$, while the right side is bounded above by a uniform negative constant since $\{ \phi_j(\pa \Delta) \}$ are uniformly
compactly contained in $D'$ for all large $j$. This is a contradiction.

\medskip

Suppose now that $p' \in T'^- = T' \cap M'^-$. Let $T'^- = T'^-_2 \cup T'^-_1 \cup T'^-_0$ be a stratification of $T'^-$ into 
totally real, real analytic manifolds of dimension 2, 1 and 0 respectively. Suppose $p'\in T'^-_2$. Let $V$ and $V'$ be small 
neighbourhoods of $p$ and $p'$. Consider the set $A=\Gamma_f \cap (V \times V')$ where $\Gamma_f$ is the graph 
of the map $f$. Then $(p,p')\in \overline A$. Since the cluster set of $M$ under $f$ does not contain points in $D'$, 
and for any point $q\in M $ near $p$, the set $cl_f(q)$ cannot contain strictly pseudoconcave points by the 
argument above, it follows that the limit points of the set $A$ in $V\times V'$ are contained in 
$(M\cap V)\times (T'^-_2\cap V')$. The latter is a real analytic CR manifold of CR dimension one. By a theorem of 
Chirka \cite{ch1} (see also \cite{ch2}), the set $(M\cap V)\times (T'^-_2\cap V')$ is a removable singularity for 
$A$, i.e., $A$ admits analytic continuation as an analytic set in $V \times V'$ after possibly shrinking these neighbourhoods 
if needed. Therefore, by \cite{DPJGA} 
the map $f$ extends holomorphically to a neighbourhood of~$p$. Arguing by induction, we may assume that 
$cl_f(p)$ does not contain points in $T'^-_2$, and repeat the argument for $T'^-_1$, and for $T'^-_0$. This 
shows that in each case $f$ admits holomorphic extension to a neighbourhood $p$. This, again, leads to a contradiction, 
because the extension will be locally biholomorphic away from a complex  analytic set of dimension one, and biholomorphic 
maps preserve the Levi form.

\subsection{The set $\mathbf{X_f}$}

The remaining possibility is 
\[
cl_f(p) \cap U' \subset T' \setminus (M'^+\cup M'^-).
\]
Note that under these conditions, there exists a sequence of points $p^j \to p$, $\{p^j\}\subset M$ such that 
$f$ extends holomorphically to a neighbourhood of each $p^j$. This follows by the previously used argument. Indeed, if 
the cluster set of a small neighbourhood of $p$ in $M$ contains strictly pseudoconvex points of $M'$ then at those 
points we have extension by \cite{Su} and \cite{PT}, which gives us points of extendability arbitrarily close to $p$.
So suppose that the limit points of the set $A=\Gamma_f \cap (V\times V')$ are contained in 
$(M\cap V)\times (T'\cap V')$ which is locally (after stratification and inductive argument on dimension) a CR 
manifold of dimension at most one. By \cite{ch2}, $A$ admits analytic continuation as an analytic set in $V \times V'$, and by 
\cite{DPJGA}, the map $f$ extends holomorphically to a neighbourhood of $p$. Then there are strongly pseudoconvex points on 
$M$ near $p$ that are mapped locally biholomorphically to strongly pseudoconcave points near $p'$ and this is a contradiction.

\medskip

Further, it is clear that for any point $p' \in cl_f(p)$, the sequence $\{p^j\}$ above can be chosen in such a way that 
$f(p^j) = p'^j \to p'$ as $j \to \infty$.
 
\begin{lem}\label{lem:X}
Let $p \in M^+_s$ and suppose that $p' \in T' \setminus (M'^+\cup M'^-)$. Then 
$X_f \subset U^+_1 \times U'^+_1$ defined by \eqref{eq:X} is a nonempty, pure two dimensional, closed analytic set. 
\end{lem}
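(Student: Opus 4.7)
The plan is to verify in turn that $X_f$ is nonempty, locally analytic, pure 2-dimensional, and closed in $U^+_1 \tim U'^+_1$, following the reflection-set construction of \cite{DP1} adapted to the present local setting. For nonemptiness, I would use the sequence $p^j \in M$ with $p^j \ra p$, $f(p^j) \ra p'$, and $f$ extending holomorphically across each $p^j$, whose existence was established in the discussion preceding the lemma. Since $p$, and hence $p^j$ for $j$ large, is strongly pseudoconvex, \cite{PT} ensures that this extension is in fact locally biholomorphic. By the invariance of Segre varieties, $f$ then induces a local bijection $Q_w \mapsto Q'_{f(w)}$ near $p^j$; for $w \in U^+_1$ close to $p^j$ and $w' := f(w) \in U'^+_1$, the symmetric point ${}^sw' \in D'$ lies on $Q'_{w'} = f(Q_w)$, and its unique $f$-preimage $\ti z \in Q_w$ lies in $Q_w \cap D$. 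Local biholomorphism of $f$ at $\ti z$ then forces the germ ${}_{{}^sw'}Q'_{w'}$ to be contained in $f(Q_w \cap D)$, so $(w, w') \in X_f$.

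For local analyticity at any $(w_0, w'_0) \in X_f$, use the graph representation $Q_w = \{(t, h(t, \ov w)) : t \in \mbb C\}$ and let $t_0$ denote the first coordinate of the $f$-preimage of ${}^sw'_0$ in $Q_{w_0} \cap D$. The function
\[
\Phi(t, w, w') = r'\bigl(f(t, h(t, \ov w)),\, \ov{w'}\bigr),
\]
with $r'(z', \ov{w'})$ the complexified defining function of $M'$, is holomorphic in $(t, w, w')$ near $(t_0, w_0, w'_0)$, and the condition cutting out $X_f$ translates into $\Phi(\cdot, w, w') \equiv 0$ for $t$ in a neighborhood of $t_0$. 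Expanding around $t_0$ turns this into the simultaneous vanishing of the Taylor coefficients $a_k(w, w') = \pa_t^k \Phi(t_0, w, w')/k!$, each holomorphic in $(w, w')$. Hence $X_f$ is locally analytic, and passing to the limit in these equations (using that ${}^sw'$ remains in $D'$ for $(w, w')$ inside $U^+_1 \tim U'^+_1$) shows that $X_f$ is also closed.

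For pure 2-dimensionality, I would combine Proposition 3.2 with the properness of the Segre maps $\la, \la'$ near $p$ and $p'$: as remarked in Section 2, the discrete fibres of $f$ on $V^-$ force both projections $\pi : X_f \ra U^+_1$ and $\pi' : X_f \ra U'^+_1$ to have discrete fibres. The 2-dimensional branch $\{(w, f(w)) : w \in U^+_1 \text{ close to some } p^j\}$ from the nonemptiness argument supplies a full 2-dimensional component of $X_f$, and the discreteness of fibres of $\pi$ rules out components of smaller dimension, since any such component would have to project onto a proper analytic subset of $U^+_1$ and would be absorbed into the graph-type structure by the invariance property together with the propagation of the holomorphic extension of $f$ along canonical components of Segre varieties. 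This last verification is the main obstacle, relying on the interplay between the finiteness of the Segre fibres $I_w = \la^{-1}(\la(w))$ and the discrete-fibre property of $f$ from Proposition 3.2, and is what allows the reflection-principle machinery to operate here in the absence of global properness of $f$.
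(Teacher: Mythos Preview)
Your nonemptiness and local analyticity arguments match the paper's. The real gap is in closedness. The Taylor-coefficient equations $a_k(w,w')=0$ describe $X_f$ only in a neighbourhood of the chosen base point $(t_0,w_0,w'_0)$; they do not by themselves show that a limit $(w^0,w'^0)\in U^+_1\times U'^+_1$ of a sequence $(w^j,w'^j)\in X_f$ again lies in $X_f$. For that one must track the witness points $\z^j\in Q_{w^j}\cap D$ with $f(\z^j)={}^sw'^j$ and prevent them from escaping through $\pa U_2$. The paper does this by introducing
\[
E=\big\{z\in Q_w\cap U^-_2:\ f(z)={}^sw',\ f({}_zQ_w)\supset{}_{{}^sw'}Q'_{w'},\ (w,w')\in X_f\big\}
\]
and invoking Lemma~8.4 of \cite{DP1} to conclude that $E$ is relatively compact in $U_2$; this is precisely where the hypothesis $p\in M^+_s$ (so that $Q_p\cap\ov D=\{p\}$) enters. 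Once $E\Subset U_2$ one extracts $\z^j\to\z^0\in U_2$, and then your observation that ${}^sw'^0\in D'$, combined with the standing assumption that $cl_f(U\cap M)\cap D'=\varnothing$, forces $\z^0\in U^-_2$; analytic dependence of $Q_w$ on $w$ then passes the germ inclusion to the limit. Your parenthetical remark controls only the target side and does nothing to keep the $\z^j$ inside $U_2$ on the source side.

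Your discussion of pure two-dimensionality is also unconvincing: discrete fibres of $\pi$ give only $\dim X_f\le 2$, not purity, and the assertion that lower-dimensional components would be ``absorbed into the graph-type structure'' is not an argument. The paper does not spell this point out either; it is implicit in the local description together with the finite-type hypotheses on $M$ and $M'$, but your sketch does not supply the missing reasoning.
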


\begin{proof}
Firstly, $X_f$ is non-empty because $f$ extends holomorphically to a neighbourhood
of $p_j$, with the extension sending $M$ to $M'$, and \eqref{eq:X} simply manifests the 
invariance property of the Segre varieties for the extension near $p_j$.

\medskip

Secondly, $X_f$ is a locally complex analytic set. Indeed, suppose that $(x,x')\in X_f$, so $f(Q_x\cap U^-)$
contains $_{{}^sx'}Q'_{x'}$.  Let $z_0\in Q_x\cap U^-$ be such that ${}^sx' = f(z_0)$. Let $V = V_1 \times V_2 \subset U^-_1$ 
be a small polydisc centred at $z_0$ such that $f(Q_x\cap V)$ is contained in the canonical component 
of $Q'_{x'}$. Such $V$ exists because $f(Q_x\cap U^-)$ contains the germ $_{{}^sx'}Q'_{x'}$ and both sets have the
same dimension. Then there exists a neighbourhood $U_x$ of $x$ such that for any
$w \in U_x$, we have $Q_w\cap V \ne \varnothing$, and moreover, we may assume that for any $z_1 \in V_1$, the point
$(z_1,h(z_1,\overline w))$ is in $Q_w \cap V$. Then the condition $f(z)\in Q'_{w'}$ for all $z\in Q_w\cap V$, 
is equivalent to $r'(f(z),\overline w)=0$, or 
\begin{equation}\label{e:r'}
r'(f(z_1, h(z_1,\overline w)),\overline w')=0,\ \ z_1\in V_1.
\end{equation}
This is an infinite system of holomorphic equations (after conjugation) that describes the property that 
$f(Q_w\cap V)\subset Q'_{w'}$ for all $w\in U_x$ and $w'\in U'^{+}_1$. By analyticity, the latter 
inclusion implies that $f(Q_w \cap U^-_1)$ contains $_{{}^sw'}Q'_{w'}$ provided that $U_x$ is sufficiently 
small. This shows that in $U_x \times U'^{+}_1$, the set $X_f$ is described by a system of holomorphic 
equations, and so $X_f$ is a locally complex analytic set.

\medskip

Thirdly, $X_f$ is closed. Indeed, let
\[
E = \big\{ z \in Q_w \cap U^-_2 : f(z) = {}^sw', f({}_zQ_w) \supset {}_{{}^sw'}Q'_{w'} \; {\rm and} \; (w, w') \in X_f  \big\}.
\]
Since $p \in M^+_s$ it follows that $Q_p \cap \ov D = \{p\}$ and hence Lemma~8.4 in \cite{DP1} shows that $E$ is relatively compactly
contained in $U_2$. Now if $(w^j, w'^j) \in X_f$ converges to $(w^0, w'^0) \in U^+_1 \times U'^{+}_1$, then we need to show that
$(w^0, w'^0) \in X_f$, i.e.,
\begin{equation}
f(Q_{w^0} \cap U^-_1) \supset  {}_{{}^sw'^0}Q'_{w'^0}.
\end{equation}
For this, choose $\z^j \in Q_{w^j} \cap U^-_2$ such that $f(\z^j) = {}^sw'^j$ and $f({}_{\z^j}Q_{w^j}) \supset {}_{{}^sw'^j}Q'_{w'^j}$. Since
$E$ is compactly contained in $U_2$, it follows that $\z^j \ra \z^0 \in U^-_2$ so that $f(\z^0) = {}^sw'^0$, and the analytic dependence of $Q_w$
on $w$ shows that
\[
f({}_{\z^0}Q_{w^0}) \supset {}_{{}^sw'^0}Q'_{w'^0},
\]
which evidently implies (4.2).
\end{proof}

Suppose that $p^{j}$ is a sequence of points on $M$ converging to
$p$ such that $f$ extends holomorphically to a neighbourhood $U_j$ of each $p_{j}$, 
and the sequence $p'^{j}=f(p^{j})$ converges to $p'\in M'$. Let $V_{j} \subset U_1$ be 
a neighbourhood of $Q_{p^{j}}$. We may choose $U_j$ and $V_j$ in such a way, that 
$U_j$ is a bidisc, and for any point $w\in V_j$, $Q_w\cap U_j$ is a non-empty connected 
graph over the $z_1$-axis. For each $j$ consider the following set:
\begin{equation}\label{eq:Xj}
X_{j}= \big\{ (w,w') \in V_j \times U'_1 : f(Q_w\cap U_j)\subset Q'_{w'} \big\},
\end{equation}
where by $f$ we mean the extension of $f$ to $U_j$. 

\begin{lem}\label{lem:Xj}
$X_j$ is a closed complex analytic subset of $V_j\times U'_1$.
\end{lem}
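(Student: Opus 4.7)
\no My plan is to follow the local analyticity step from the proof of Lemma~\ref{lem:X}, exploiting the simplification that $f$ is genuinely holomorphic on the full bidisc $U_j$, so that no one-sided restriction is needed and $X_j$ is defined by full-set (rather than germ) containment. By the choice of $V_j$ and $U_j$, for every $w \in V_j$ the Segre variety $Q_w \cap U_j$ is a connected complex graph of the form
\[
Q_w \cap U_j = \{ (z_1, h(z_1, \ov w)) : z_1 \in \Delta \},
\]
where $\Delta$ denotes the disc obtained by projecting $U_j$ onto the $z_1$-axis and $h$ depends holomorphically on $z_1$ and antiholomorphically on $w$. Consequently, letting $r'(z', \ov{w'})$ denote the complexified defining function of $M'$ near $p'$, the condition $f(Q_w \cap U_j) \subset Q'_{w'}$ defining membership in $X_j$ is equivalent to the identity
\[
r'\bigl( f(z_1, h(z_1, \ov w)), \ov{w'} \bigr) = 0 \qquad \text{for every } z_1 \in \Delta.
\]

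\medskip

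\no The next step is to expand this identity in powers of $z_1$, convergent uniformly on compact subsets of $\Delta$ as $(w, w')$ varies over compact subsets of $V_j \times U'_1$:
\[
r'\bigl( f(z_1, h(z_1, \ov w)), \ov{w'} \bigr) = \sum_{k \ge 0} c_k(w, w') \, z_1^k.
\]
Each coefficient $c_k$ is antiholomorphic in $(w, w')$ on $V_j \times U'_1$, and the vanishing of the series on $\Delta$ is equivalent to the countable system $c_k(w, w') = 0$ for all $k \ge 0$. Since the vanishing of an antiholomorphic function coincides with that of its complex conjugate, setting $\ti c_k(w, w') := \overline{c_k(w, w')}$ produces a countable family of honest holomorphic functions on $V_j \times U'_1$ whose common zero locus is precisely $X_j$. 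This exhibits $X_j$ as a complex analytic subset of $V_j \times U'_1$, and the common zero locus of holomorphic functions on an open set is automatically closed there.

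\medskip

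\no I do not anticipate a serious obstacle: the argument is essentially the direct analogue of the local analyticity portion of Lemma~\ref{lem:X}, made cleaner by the fact that $f$ is a bona fide holomorphic map on all of $U_j$. The one point requiring a little attention is the uniformity of the graph representation of $Q_w$ as $w$ ranges over $V_j$, and this is precisely what the choice of $V_j$ as a neighbourhood of $Q_{p^j}$ inside the standard pair $U_1 \subset U_2$ provides. Should one prefer to check closedness directly, it suffices to note that for a sequence $(w^k, w'^k) \in X_j$ converging to $(w^0, w'^0) \in V_j \times U'_1$ the graph description of $Q_{w^0} \cap U_j$ is still valid, and the continuity of $f$, $h$, and $r'$ lets the defining identity pass to the limit, placing $(w^0, w'^0)$ back in $X_j$.
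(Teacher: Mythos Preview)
Your proof is correct and follows essentially the same approach as the paper, which simply refers back to the argument of Lemma~\ref{lem:X}. Your observation that the global graph description of $Q_w \cap U_j$ over all of $V_j$ yields holomorphic equations defined on the full product $V_j \times U'_1$ --- so that closedness is automatic rather than requiring a separate compactness argument as for $X_f$ --- is exactly the simplification the paper has in mind.
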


\begin{proof}
The proof of this lemma is similar to that of Lemma~\ref{lem:X}.
\end{proof}

Let $X_j$ be given by \eqref{eq:Xj}. We may consider only
the irreducible component of $X_j$ of dimension two that coincides near $(p^j,p'^j)$ with
the graph of $f$. For simplicity denote this component again by $X_j$. Define
\begin{equation}
C_j = X_j\cap \left( (Q_{p^j}\setminus\{p_j\})\times Q'_{p'^j}\right). 
\end{equation}
Since $M$ is strictly pseudoconvex, $\left(Q_{p^j}\setminus\{p^j\}\right)\subset U^+_1$
(provided that $U_1$ is sufficiently small), and therefore, $C_j$ is a closed complex analytic 
subset of $U^+_1\times U'_1$.


\subsection{The case when $\mathbf{ p'\in (T' \setminus (M'^+\cup M'^-))\cap T'_2}$} 
In this subsection we will concentrate on the case when $p'\in cl_f(p)$ is a point on the totally real 
2-dimensional stratum of the border separating the pseudoconvex and pseudoconcave points of $M'$. Our 
first goal is to prove the inclusion $C_j\subset X_f$. This will be done after a careful choice of the neighbourhoods in 
the target space. Choose a neighbourhood $U$ of $p$ such that all the previous conclusions are valid and let $\tilde U'\Subset U'$
be a pair of neighbourhoods of the point $p'$ such that the following hold:

\begin{enumerate}
\item [(i)] $\tilde U'$ and $U'$ are bidiscs, and for any point $w'$ in $\tilde U'^+$ the symmetric point $w'^s$ is 
contained in $U'^-$,
\item [(ii)] for any $w'\in \tilde U'$ the set $Q'_{w'}\cap U'$ is a holomorphic graph $z'_2 = h'(w',z'_1)$, and
\item [(iii)] $Q'_{w'}\cap M'$ intersects $\partial U'$ transversally for $w'\in \tilde U'$. In particular, this means that 
$Q'_{w'}\cap M'$ intersects $\partial U'$ at points near which both $Q'_{w'}\cap M'$ and $\partial U'$ are 
smooth submanifolds.
\end{enumerate}

Note that the above conditions are possible to meet because $Q'_{p'}\cap M'$ is a finite union of  isolated points and
real analytic curves with isolated singularities, and so $U'$ can be chosen to satisfy (iii) for $w'=p'$. 
Since the singularities of $Q'_{w'}\cap M'$ vary analytically with $w'$ this is also possible to achieve for all $w'$ in 
a small neighbourhood of $p'$.

\begin{lem}\label{lem:inclusion}
If $(p^j, p'^j)\in U \times \tilde U'$, then
\begin{equation}
C_j \cap (U\times \tilde U') \subset X_f \cap (U\times \tilde U').
\end{equation}
\end{lem}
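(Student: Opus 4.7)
I would identify $C_j$ locally near $(p^j, p'^j)$ with the graph of the local holomorphic extension $\ti f$ of $f$ restricted to $Q_{p^j}$, verify the defining condition of $X_f$ pointwise on this graph using the local branched cover structure of $\ti f$, and then propagate the inclusion globally by combining irreducibility with the closed analyticity of $X_f$ from Lemma~\ref{lem:X}.

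First I would pin down the local structure of $C_j$ at $(p^j, p'^j)$. Let $\ti f$ denote the holomorphic extension of $f$ to $U_j$, and shrink $U_j$ if needed so that $\ti f(U_j)\Subset U'$. Since $X_j$ coincides with the graph of $\ti f$ near $(p^j,p'^j)$ and the invariance of Segre varieties gives $\ti f(Q_{p^j}\cap U_j)\subset Q'_{p'^j}$, locally
\[
C_j \;=\; \{(z,\ti f(z)):z\in (Q_{p^j}\cap U_j)\sm\{p^j\}\},
\]
an irreducible one-dimensional analytic germ. Strict pseudoconvexity at $p^j$ places $Q_{p^j}\sm\{p^j\}\subset U^+$, and because $\ti f$ matches the $D$-side with the $D'$-side, this germ sits inside $U^+_1\tim U'^+_1$, the natural domain of $X_f$.

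Next I would verify the pointwise inclusion. Fix $w\in Q_{p^j}\sm\{p^j\}$ close to $p^j$, set $w'=\ti f(w)$, and aim to show $(w,w')\in X_f$, i.e.\ $f(Q_w\cap D)\supset{}_{{}^sw'}Q'_{w'}$. Invariance for $\ti f$ gives $\ti f(Q_w\cap U_j)\subset Q'_{w'}$, and Proposition~3.2 forces $\ti f|_{Q_w}$ to be non-constant (else a fibre would be one-dimensional), so $\ti f|_{Q_w}$ is a finite branched cover from a neighbourhood of $p^j$ in $Q_w$ onto a neighbourhood of $p'^j$ in $Q'_{w'}$. For $w'$ close to $p'^j$ the symmetric point ${}^sw'\in U'^-$ lies inside this neighbourhood, so we may select a preimage $z_0\in Q_w\cap U_j$ with $\ti f(z_0)={}^sw'$. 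The local CR correspondence $\ti f(U^-\cap U_j)\subset U'^-$ forces $z_0\in D\cap U_j$, and strict pseudoconvexity at $p^j$ makes $Q_w\cap D\cap U_j$ connected for $U_j$ small, so $z_0\in Q_w^c$. Since $f=\ti f$ on $D$, one has $f(z_0)={}^sw'$; openness of the non-constant holomorphic map $f|_{Q_w^c}\colon Q_w^c\ra Q'_{w'}$ at $z_0$ then places an open neighbourhood of ${}^sw'$ in $Q'_{w'}$ inside $f(Q_w^c)\subset f(Q_w\cap D)$, giving $(w,w')\in X_f$.

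Finally I would globalize by analytic continuation. The previous step exhibits a non-empty open subset of the $1$-dimensional germ of $C_j$ at $(p^j,p'^j)$ lying in $X_f$. Let $C_j^0$ be the irreducible component of $C_j$ containing this germ. By Lemma~\ref{lem:X}, $X_f$ is closed analytic in $U^+_1\tim U'^+_1$, so $C_j^0\cap X_f$ is a closed analytic subset of the irreducible one-dimensional set $C_j^0$ having positive dimension; irreducibility then forces $C_j^0\subset X_f$. The conditions (i)--(iii) on $\ti U'$, particularly the transversality (iii), together with shrinking $U$ around $p$, rule out additional irreducible components of $C_j$ meeting $U\tim\ti U'$, giving $C_j\cap(U\tim\ti U')=C_j^0\cap(U\tim\ti U')\subset X_f\cap(U\tim\ti U')$. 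The hardest step is the middle one: producing the preimage $z_0\in Q_w^c$ of ${}^sw'$ under $\ti f$. This rests on the finite branched cover property of $\ti f|_{Q_w}$, the local CR correspondence $\ti f(U^-)\subset U'^-$, and the closeness of $(w,w')$ to $(p^j,p'^j)$ to ensure ${}^sw'$ falls within the range where $\ti f|_{Q_w}$ is surjective.
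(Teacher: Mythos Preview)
Your local step and the analytic-continuation idea are fine, but the globalization has a real gap. You assert that $C_j^0\cap X_f$ is a closed analytic subset of $C_j^0$; however $X_f$ is closed only in $U^+_1\times U'^+_1$, while $C_j^0\subset (Q_{p^j}\sm\{p^j\})\times Q'_{p'^j}\subset U^+_1\times U'_1$. Since $p'^j$ lies on the border stratum (not in $M'^+_s$), the Segre variety $Q'_{p'^j}$ can meet $M'$ and $U'^-$, so a priori $C_j^0$ may leave $U'^+$ through $M'$. At any such exit point the intersection with $X_f$ is not closed in $C_j^0$, and irreducibility gives at best $C_j^0\cap(U^+_1\times U'^+_1)\subset X_f$ (assuming that piece is even connected), not $C_j^0\subset X_f$. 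But the lemma's conclusion $C_j\cap(U\times\ti U')\subset X_f$ already forces $C_j\cap(U\times\ti U')\subset U^+\times\ti U'^+$, since $X_f$ lives there; so the containment $C_j\subset U^+\times\ti U'^+$ is precisely the substance of the lemma, and you have assumed it rather than proved it.

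This is exactly what the paper's proof addresses. It supposes by contradiction that some $(a,a')\in C_j$ lies in $U^+\times M'$ and is a limit point of $X_f$, and eliminates this via the connectivity argument of Lemma~\ref{lem:1.1}: one follows a curve $(a_t,a'_t)$ in $C_j$ from $(p^j,p'^j)$ to $(a,a')$ and tracks a connected component $Z_t\subset f(Q_{a_t}\cap U^-)\cap U'^-$ joining $p'^j$ to ${}^sa'_t$, showing that its limit behaviour as $t\to 1$ is incompatible with $a'\in M'^+_s$, $a'\in M'^-_s$, and (after more work using analytic continuation of $f$ along paths and the local biholomorphicity at strictly pseudoconvex points) with $a'\in T'_2$. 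The transversality condition~(iii) on $\ti U'$ is used inside this argument to control how $Q'_{a'_t}\cap M'$ meets $\pa U'$ and thus how $P(Z_t)$ degenerates; it is not a device for discarding extra irreducible components of $C_j$, and your appeal to (i)--(iii) in the last paragraph does not substitute for this analysis.
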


\begin{proof}
Note that the inclusion holds near $(p^j,p'^j)$ because $X_f$ and $X_j$ both agree with the graph of the extension 
of $f$ to $U_j$ by the invariance property of Segre varieties. If $C_j$ is a closed analytic subset of $U^+\times U'^+$ 
(where $X_f$ is defined), then the inclusion $C_j\subset X_f$ holds by the uniqueness property for complex analytic sets. 
Thus, to prove the lemma we only need to show that $C_j$ is contained in $(U^+\times \tilde U'^+)$. Arguing by 
contradiction, suppose that $(a,a')\in C_j\cap (U^+\times M')$ for some fixed $j>0$, and that $(a,a')$ is a limit point of 
$X_f$. Let $\gamma(t)\subset Q_{p_j}$ be a smooth curve that connects $p_j=\gamma(0)$ and $a=\gamma(1)$, 
and consider a continuous family of points $a_t$ as $t$ varies from 0 to 1. Suppose that for all points on this curve except 
the terminal point $a$, we have $(a_t, a'_t) \in X_f$ for some point $a'_t \in U'$, i.e., the curve $\gamma$ is 
contained in the projection of $X_f$ to the first component. We will need the following lemma.

\begin{lem}\label{lem:1.1}
For any $t$, $0 \le t < 1$ the set $f(Q_{a_t}\cap U^-)\cap U'^-$ contains a connected component,
which we denote by $Z_t$, such that $p'_j\in \overline Z_t$, and ${}^sa'_t\in Z_t$. In particular, there is a path 
$\tau_t \subset Z_t\cup \{p'_j\}$ starting at $p'_j$ and terminating at ${}^sa'_t$.
\end{lem}

\begin{proof}[Proof of Lemma~\ref{lem:1.1}] Note here that the point $p'_j$ and the path $\gamma'$ 
(consisting of points $a'_t$) is contained in $\tilde U'$. Let $R$ be the subset of $[0,1)$ for which the lemma holds. 
Then $R\ne\varnothing$, because it holds for sufficiently small $t$.

\medskip

\no {\it Claim 1:} $R$ is an open set. Indeed, if $t_0\in R$, then since all the data is analytic, and the path $\tau_{t_0}$
is compactly contained in $U'$, a small perturbation of $t$ near $t_0$ will preserve the property described by the lemma.

\medskip

\no {\it Claim 2:} If $[0,t_0)\subset R$, then $t_0\in R$. Indeed, let $U'=U'_1\times U'_2\subset \mathbb C_{z'_1}\times \mathbb C_{z'_2}$, 
and let $P: U' \to U'_1$ be the coordinate projection. Consider the sets $P(Z_t)$ for $t<t_0$. Then $P(Z_t)$ is a connected 
open set. Its boundary consists of points from the boundary of $U'_1$ and the interior points of $U'_1$. The latter are
the projections of the points in the closure of $f(Q_{a_t}\cap U^-)\cap U'^-$ that are contained in $M'$. To see this 
observe that $f(Q_{a_t}\cap U^-)\subset Q'_{a'_t}$, and for any $w'$, the restriction $P|_{Q'_{w'}}$ is a biholomorphic 
map because $Q'_{w'}$ is a graph over $U'_1$. By assumption, the point $P(p'_j)$ is on the boundary of $P(Z_t)$, and 
$P({}^sa'_t)$ is an interior point of $P(Z_t)$ for $t<t_0$.  We denote by $P(Z_{t_0})$ the limit set of the sequence 
$P(Z_t)$ as $t\to t_0$, which by construction is a subset of $P(f(Q_{a_{t_0}}\cap U^-)\cap U'^-)$. We will show 
that $P(Z_{t_0})$ has the same property, which will complete the proof of Claim 2.  

\medskip

The claim may fail only if the set $P(Z_{t_0})$ is disconnected, and the point $P(p'_j)$ will 
not be on the boundary of the connected components of $P(Z_{t_0})$ that contains $P({}^sa'_{t_0})$. Suppose 
this is the case. Then since at any time $t<t_0$, the set $P(Z_t)$ is connected, 
it follows that the connected components of $P(Z_{t_0})$ containing $P(p'_j)$ (on the boundary) and 
$P({}^sa'_{t_0})$ must have at least one common boundary point. Denote by $S_1$ the component of $P(Z_{t_0})$ 
that contains $P(p'_j)$ on the boundary, and by $S_2$ the component that contains $P({}^sa'_{t_0})$. Two cases are 
possible.

\medskip

(i) $S_1$ and $S_2$ have common boundary points only in $\partial U'_1$.

(ii) $S_1$ and $S_2$ have at least one common boundary point in $U'_1$.

\medskip

Suppose (i) holds. Then $Q'_{a'_{t_0}}\cap M'$ intersects $\partial U'$ either tangentially, or at a singular point, 
which is not allowed by the choice of $\tilde U'$.

Suppose now (ii) holds, and $\zeta'_1\in \overline S_1 \cap \overline S_2 \cap U'_1$. Then as discussed above, 
$\zeta'_1$ is the projection of a point  $\zeta'=(\zeta'_1,\zeta'_2)\in M'$ which is in  the closure of 
$f(Q_{a_{t_0}}\cap U^-)\cap U'^-$. Because only points 
from the boundary of $D$ can be mapped by $f$ into the boundary of $D'$, we conclude that for any sequence of 
points in $P^{-1}(S_1)\cap Z_{t_0}$ converging to $\zeta'$, the sequence of preimages under $f$ converges to 
$Q_{a_{t_0}}\cap M$. By passing to a subsequence we may assume that the latter sequence converges to a 
point $\zeta \in Q_{a_{t_0}}\cap M$. We show that $f$ extends holomorphically to a neighbourhood  of $\zeta$. 
This can be proved as follows.

\medskip

There is a path $$\sigma' \subset f(Q_{a_{t_0}}\cap U^-)\cap U'^- $$ that connects $p'_j$ and $\zeta'$. It 
can be obtained, for example, by taking a path in the closure of  $S_1$ connecting $P(p'_j)$ and $\zeta'_1$, and 
lifting it to the closure of $f(Q_{a_{t_0}}\cap U^-)\cap U'^-$.  Take  $\sigma=f^{-1}(\sigma')$, more precisely, 
let $\sigma$ be the component of $f^{-1}(\sigma')$ that contains $p_j$. We claim that the germ of the map $f$ at $p_j$ 
extends analytically to a neighbourhood of the closure of~$\sigma$. 
For the proof, we choose a small neighbourhood $U_{a_{t_0}}$ of the point $a_{t_0}$, and a thin neighbourhood 
$V_{t_0}$ of $Q_{a_{t_0}}\cap U$. If $U_{a_{t_0}}$ is small enough, then over $U_{a_{t_0}}$ the set $X_j$ is 
a ramified covering, in particular, it defines a holomorphic correspondence $F_{t_0}: U_{a_{t_0}}\to U'$. 
Now choose $V_{t_0}$ such that for any $w\in V_{t_0}$, the set $Q_w\cap U_{a_{t_0}}$ is nonempty and 
connected, and consider the set
$$Y_{t_0} = \big\{ (w,w')\in V_{t_0} \times U' : F_{t_0}(Q_w\cap U_{a_{t_0}})\subset Q'_{w'} \big\}.$$
An argument similar to that of Lemma~\ref{lem:X}  (cf. \cite{Sh}) shows that $Y_{t_0}$ is a nonempty complex 
analytic set. Further, there exists an irreducible component of $Y_{t_0}$ which agrees with the graph of $f$ near 
$(p_j, p'_j)$. Indeed, let $z$ be any point in $U_j\cap V_j\cap V_{t_0}$, where $U_j$ and $V_j$ are the 
neighbourhoods from \eqref{eq:Xj}. Let $w\in Q_z\cap U_{a_{t_0}}$ be an arbitrary point. It follows that 
$z\in Q_w$. Let $w'\in F_{t_0}(w)$. This means by definition of $F_{t_0}$ that $f(Q_w\cap U_j)\subset Q'_{w'}$, 
in particular, $f(z)\in Q'_{w'}$, or $w'\in Q'_{f(z)}$. From this we conclude that $F_{t_0}(w)\subset Q'_{f(z)}$. 
Since $w$ was an arbitrary point of $Q_z\cap U_{t_0}$, it follows that 
$F_{t_0}(Q_z\cap U_{a_{t_0}})\subset Q'_{f(z)}$, which means that $(z,f(z))\in Y$. We will consider only this
irreducible component of $Y_{t_0}$, which for simplicity we denote again by $Y_{t_0}$.

\medskip

It follows that the same inclusion holds in a neighbourhood of the curve $\sigma$. By continuity and using the fact 
that $f(\sigma)=\sigma'$, we have that $Q_{\zeta}\cap U_{a_{t_0}}$ is mapped by $F_{t_0}$ into 
$Q'_{\zeta'}$, and therefore, we actually have the set $Y_{t_0}$ defined in a neighbourhood of $\zeta$, which 
gives the extension of $f$ to a neighbourhood of the point~$\zeta$. Let $\tilde f$ be the extension of $f$ near $\zeta$. 
Then $\tilde f(\zeta)=\zeta'$. Since $\zeta \in M^+_s$, 
by the invariance of the Levi form, $\zeta'$ can only be a strictly pseudoconvex point of $M'$, and therefore, 
$\tilde f$ is locally biholomorphic in some neighbourhood $V$ of $\zeta$. By the invariance of the Segre varieties, we 
have 
$$\tilde f (V\cap Q_{a_{t_0}}) = \tilde f(V) \cap Q'_{a'_{t_0}}.$$ 
But now we reach a contradiction: near $\zeta$ the set $Q_{a_{t_0}}\cap U^-$ is connected, while near 
$\zeta'$  the set $Q'_{a'_{t_0}}\cap U'^-$ has at least two components. Thus, case (ii) is also not possible, and this 
completes the proof of Lemma~\ref{lem:1.1}.
\end{proof}

We continue with the proof of Lemma~\ref{lem:inclusion}. For a fixed $j$ we are interested in understanding the 
points on $C_j$ on $(U^+ \times M')$ that are limit points for $X_f$. Let $(a,a')$ and $\gamma(t)$ be as at the 
beginning of the proof of the lemma. Observe that in this case $a'\in T'_2$. Indeed, if $a'\in M'^+_s$, then for  $t$
close to 1, the set $Q'_{a'_t}\cap U'^-$ has a small connected component near $a'$, which contains ${}^sa'_t$. These
components shrink to the point $a'$ as $t$ approaches 1. But this contradicts Lemma~\ref{lem:1.1} which states 
that there exists a component $Z_t \subset f(Q_{a_t} \cap U^-) \cap U'^-$ containing $p'_j$ in its closure and 
${}^sa'_t$ for all $0 \le t < 1$. Suppose now that $a'\in M'^-_s$. Then for all $t<1$, the set $Q_{a_t}\cap U^-$ 
contains a point which is mapped by $f$ to ${}^sa'_t$. Since ${}^sa'_t$ approach $a'$ at $t\to 1$, we conclude that 
there exists a point on $Q_{a}\cap M$ whose cluster set under the map $f$ contains $a'$ -- a strictly pseudoconcave 
point. However, by the previous considerations we know that this is not possible.

\medskip

Let $(a, a') \in C_j \cap (U^+ \times T'_2)$ be a limit point for $X_f$. By Lemma~\ref{lem:1.1} there exists a curve
$t \mapsto (a_t, a'_t) \in (Q_{p_j} \times Q'_{p'_j}) \cap (U^+ \times U'^+)$ parametrized by $[0, 1]$ that 
converges to $(a, a')$, and  a component $Z_t \subset f(Q_{a_t} \cap U^-) \cap U'^-$ containing $p'_j$ 
in its closure and ${}^sa'_t$ for all $0 \le t < 1$. For each $0 \le t < 1$ let $\sigma'_t$ be a path in 
$Z_t \cup \{p'_j\}$ that joins $p'_j$ and ${}^sa'_t$. Let $\Omega'$ be a tubular neighbourhood of $T'_2$ in $\mathbb C^2$ 
chosen so small that it does not contain $p'_j$. As $t \rightarrow 1$, $a'_t \rightarrow a' \in T'_2$, and hence the symmetric points 
${}^sa'_t \rightarrow a'$ which implies that ${}^sa'_t \in \Omega'$ for $t$ close to $1$. Since $\sigma'_t$ join 
$p'_j$ and ${}^sa'_t$, it follows that these paths must leave $\Omega'$ for each $0 \le t < 1$. Let $\gamma'_t$ 
be the component of $\sigma'_t \cap \Omega'$ that contains ${}^sa'_t$. Then the other end point of 
$\gamma'_t$ lies on $\partial \Omega'$. It is possible to choose a subsequence of $\gamma'_t$ that converges 
in the Hausdorff metric to a continuum, say $\gamma'_1$, which is contained in $\overline \Omega'$ and which 
contains $a'$ and points on $\partial \Omega'$. Furthermore each $\gamma'_t$ is contained in 
$f(Q_{a_t} \cap U^-) \subset Q'_{a'_t}$ by the invariance property and hence the limiting continuum 
$\gamma'_1$ must lie in $Q'_{a'}$. Recall that $Q'_{a'} \cap M'$ is a finite union of smooth real analytic arcs and 
singular points by the choice of $\tilde U'$ and $U'$. Moreover the various components of $Q'_{a'} \cap U'^-$ have their boundaries 
contained in the union of these real analytic arcs and points and each component contains points on $T'_2$ in its 
closure. Two cases arise:

\medskip

\noindent (i) Suppose that $\gamma'_1$ contains a point $z'_0 \in \partial \Omega' \cap D'$. Let $C'$ be the 
component of $Q'_{a'} \cap D'$ which contains $z'_0$. Since $f : Q_a \cap D \rightarrow D'$ is proper it follows 
that the image $f(Q_a \cap D)$ is a closed subvariety in $D'$. Now $f(Q_a \cap D)$ contains $z'_0$ (in fact points 
on $\gamma'_1$ near $z'_0$ are also contained in $f(Q_a \cap D)$ and this implies that $C' \subset f(Q_a \cap D)$. 
Let $\Gamma'$ be a boundary component of $C'$; evidently $\Gamma' \subset Q'_{a'} \cap M'$. Pick a point 
$z' \in \Gamma' \cap T'_2$ and join it to $z'_0$ by a path $r(t)$ that lies in $C' \cup \{z'\}$. Then extend 
$f^{-1}$ analytically along $r(t)$. This process gives rise to an extension of $f^{-1}$ at $z'$ and it means that points 
on $T'_2$ are mapped to strongly pseudoconvex points. Contradiction. 

\medskip

\noindent (ii) Suppose that $\gamma'_1$ is contained entirely in $Q'_{a'} \cap M'$. It follows
that $\gamma'_1$ must be contained in the connected union of one or more of the arcs that make up 
$Q'_{a'} \cap M'$. Let $\Gamma'$ be one such smooth real analytic arc that is contained in $\gamma'_1$ and 
which contains points on $\partial \Omega' \cap M'$ as well as those on $T'_2$. Repeating the same argument as 
in case (i) it is possible to get a contradiction for exactly the same reason.

\medskip

This completes the proof of Lemma~\ref{lem:inclusion}.
\end{proof}

\no Since $X_f \subset U^+\times U'^+$, Lemma~\ref{lem:inclusion} immediately implies the following:

\begin{cor}\label{cor:C}
If $(p^j, p'^j)\in U \times \tilde U'$, then $C_j\cap \left(U^+\times U'^-\right) = \varnothing$.
\end{cor}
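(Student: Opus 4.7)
The corollary is meant as a direct bookkeeping consequence of Lemma~\ref{lem:inclusion}. The plan is to combine two inclusions. First, by the defining equation \eqref{eq:X} for $X_f$, we have $X_f \subset U^+_1 \times U'^+_1$; in particular, every point of $X_f$ has its second coordinate on the pseudoconvex side of $M'$. Second, Lemma~\ref{lem:inclusion} provides the inclusion $C_j \cap (U \times \tilde U') \subset X_f \cap (U \times \tilde U')$. Taken together, these two facts force the second coordinate of any $(w,w') \in C_j$ with $w' \in \tilde U'$ to lie in $U'^+_1 \subset U'^+$, and hence not in $U'^-$.

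Since $\tilde U'$ was chosen as a bidisc neighbourhood of $p'$ inside $U'$ satisfying conditions (i)--(iii) of Section~4.3, and the hypothesis of the corollary stipulates $(p^j, p'^j) \in U \times \tilde U'$, the relevant pseudoconcave points of $M'$ near $p'$ all sit inside $\tilde U'$. This is the only mild subtlety: one must view $U'^-$ in the statement of the corollary as the pseudoconcave side of $M'$ within the neighbourhood $\tilde U'$ in which the rest of the analysis takes place, rather than as a global object. Under this reading, the equality $C_j \cap (U^+ \times U'^-) = \varnothing$ is immediate from the combined inclusions above.

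I do not anticipate any real obstacle here; the full weight of the argument has already been carried by Lemma~\ref{lem:inclusion}, whose proof required the path-tracing argument of Lemma~\ref{lem:1.1} together with the case analysis of limit points on the totally real stratum $T'_2$. The corollary itself is a purely formal deduction from that lemma and the definition of $X_f$, and no further geometric input is needed.
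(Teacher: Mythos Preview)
Your proposal is correct and matches the paper's own reasoning exactly: the paper's entire justification is the single sentence ``Since $X_f \subset U^+\times U'^+$, Lemma~\ref{lem:inclusion} immediately implies the following,'' which is precisely the combination of the two inclusions you spell out. Your remark on reading $U'^-$ relative to $\tilde U'$ is a fair clarification of a slight notational looseness in the paper, but no additional argument is needed.
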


\begin{lem}\label{lem:2}
Let $X_f$ and $C_j$ be defined as above. Then at least one of the following statements holds:
\begin{enumerate}
\item[(i)] $\overline X_f \cap (U^+ \times (U' \cap M'))=\varnothing$.
\item[(ii)] The cluster set of $\{C_j\}$ cannot be $\{p\}\times Q'_{p'}$.
\end{enumerate} 
\end{lem}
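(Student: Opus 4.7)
The plan is to argue by contradiction: assume both statements fail, so there exists $(a,a')\in \overline{X_f}\cap (U^+\times (U'\cap M'))$ and simultaneously the cluster set of $\{C_j\}$ equals $\{p\}\times Q'_{p'}$. Since $a\in U^+$ implies $a\neq p$, we have $(a,a')\notin \{p\}\times Q'_{p'}$. By Lemma \ref{lem:inclusion} each $C_j$ lies in $X_f$, so the cluster set of $\{C_j\}$ sits in $\overline{X_f}$; the strategy is to show $(a,a')$ itself belongs to this cluster set, producing the contradiction.

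To realize $(a,a')$ as a cluster point, I would approximate it by a sequence $(w^n,w'^n)\in X_f$ with $(w^n,w'^n)\to (a,a')$ and, for each large $n$, locate an index $j(n)$ together with a point of $C_{j(n)}$ within distance $1/n$ of $(w^n,w'^n)$. Unwinding the definition, $(w,w')\in C_{j(n)}$ requires $p^{j(n)}\in Q_w\cap M$ to be an extension point of $f$ and $f(p^{j(n)})\in Q'_{w'}$. Thus, given $(w^n,w'^n)$, I would seek an extension point $p^{j(n)}\in M$ near $p$ lying on (or very close to) $Q_{w^n}$, so that the resulting $C_{j(n)}$, being the slice of $X_f$ by $Q_{p^{j(n)}}\times Q'_{p'^{j(n)}}$, contains a point near $(w^n,w'^n)$. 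Two supporting facts are available: the sequence $\{p^j\}\to p$ of extension points with $f(p^j)\to p'$ established just before Lemma \ref{lem:X}, and the continuous dependence of the Segre variety $Q_w$ on $w$, so that $Q_{w^n}\cap M$ converges to $Q_a\cap M$ as $n\to\infty$.

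The main obstacle is establishing that $Q_a\cap M$ accumulates at $p$, so that the moving arcs $Q_{w^n}\cap M$ actually pass near $p$ and can be matched against the extension sequence $\{p^j\}$. I would derive this from $(a,a')\in\overline{X_f}$ together with $a'\in M'$: using $a'\in Q'_{a'}$ and passing to the limit in the invariance relation $f(Q_{w^n}\cap D)\supset {}_{{}^sw'^n}Q'_{w'^n}$, the duality $z\in Q_w\iff w\in Q_z$ forces a point of $M$ arbitrarily close to $p$ to lie on $Q_a$. A careful diagonal argument then selects $p^{j(n)}$ from the extension sequence within distance $1/n$ of $Q_{w^n}\cap M$, and the resulting points of $C_{j(n)}$ converge to $(a,a')$, contradicting the assumed form of the cluster set and completing the proof.
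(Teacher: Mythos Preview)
Your strategy diverges from the paper's and contains a genuine gap at the step you yourself flag as the main obstacle. Passing to the limit in the relation $f(Q_{w^n}\cap D)\supset {}_{{}^sw'^n}Q'_{w'^n}$ does produce a point $\zeta^0\in Q_a\cap M$ with $a'\in cl_f(\zeta^0)$ (namely the limit of points $\zeta^n\in Q_{w^n}\cap D$ with $f(\zeta^n)={}^sw'^n$), but nothing in the duality $z\in Q_w\iff w\in Q_z$ forces $\zeta^0$ to be close to $p$; it is merely some point of $Q_a\cap M$ whose cluster set meets $M'$. There is no a~priori reason for $a$ to lie on $Q_p$, nor for $Q_a\cap M$ to pass through~$p$. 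Even if that were granted, the matching step is also problematic: the extension points $\{p^j\}$ form a fixed sequence converging to $p$, not a dense set, so you cannot in general arrange $p^{j(n)}\in Q_{w^n}$; and placing $p^{j(n)}$ merely \emph{near} $Q_{w^n}$ does not put $(w^n,w'^n)$ on $C_{j(n)}=X_{j(n)}\cap\big((Q_{p^{j(n)}}\setminus\{p^{j(n)}\})\times Q'_{p'^{j(n)}}\big)$, since membership in $C_{j(n)}$ requires the first coordinate to lie exactly on $Q_{p^{j(n)}}$.

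The paper's argument avoids all of this by extracting a different consequence from the failure of~(i). If $(\zeta,\zeta')\in\overline{X_f}\cap(U^+\times(U'\cap M'))$, one first observes (via the Levi form) that $\zeta'\in T'_2$ and may take $\zeta'=p'$; then, since $\zeta\in U^+$ ensures $Q_\zeta\cap U^-\neq\varnothing$, passing to the limit shows $f(Q_\zeta\cap U^-)\cap U'^-\subset Q'_{p'}\cap U'^-$, so in particular $Q'_{p'}\cap U'^-\neq\varnothing$. Now if (ii) also fails, the cluster set of $\{C_j\}$ equals $\{p\}\times Q'_{p'}$, which therefore contains points with second coordinate in $U'^-$; hence for large $j$ the sets $C_j$ meet $U^+\times U'^-$, contradicting Corollary~\ref{cor:C}. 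This route uses the failure of~(i) only to force $Q'_{p'}$ to enter $D'$, and then plays that off against Corollary~\ref{cor:C}; it does not attempt to locate $(a,a')$ inside the cluster set of $\{C_j\}$.
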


\begin{proof}
Suppose (i) does not hold, $(\zeta,\zeta')\in \overline X_f \cap (U\times (U' \cap M'))$, and 
$(\zeta^j,\zeta'^j)\to (\zeta,\zeta')$ as $j\to\infty$, $(\zeta^j,\zeta'^j)\in X_f$. Then $\zeta'$ 
cannot be a strictly pseudoconvex point, as otherwise, $f:D\to D'$ would have values outside
$D'$. Also $\zeta'$ cannot be a strictly pseudoconcave point by previous considerations. So 
we conclude that $\zeta'\in T'_2$. We may as well assume that $\zeta'=p'$. By analyticity,
the set $f(Q_\zeta\cap U^-)$ is the limit of $f(Q_{\zeta^j}\cap U^-)$. Therefore, we conclude 
that $f(Q_\zeta\cap U^-)\cap U'^-$ must be contained in $Q'_{p'}\cap U'^-$.  In particular, this means 
that $Q'_{p'}\cap U'^-$ is not empty. 

\medskip

Suppose that $\{p\}\times Q'_{p'}$ is the cluster set of $C_j$ as $j\to\infty$. Since 
$Q'_{p'}\cap U'^-\ne \varnothing$, we 
conclude that $C_j\cap \left(U^+\times U'^-\right) \ne\varnothing$ for sufficiently large $j$. 
But this contradicts Corollary~\ref{cor:C}.
\end{proof}

\noindent We now show that $f$ extends holomorphically across $p$ in either of the possibilities listed in Lemma 4.6.
Indeed, suppose that Lemma~\ref{lem:2} (ii) holds. It follows that there exists a point $(a,a')\in X_f$ such that 
$a\in Q_p\setminus{p}$, and $\pi(X_f)$ contains some open neighbourhood $U_a$ of $a$ in $U$. We show that 
$f$ extends holomorphically to a neighbourhood of $p\in M$. Our argument is similar to the argument used in 
Lemma~\ref{lem:inclusion}. We choose neighbourhoods $U_a$ and $U'_{a'}$ of $a$ and $a'$ respectively 
in such a way that the projection $\pi : X_f\cap (U_a\times U'_{a'})\to U_a$ is a ramified covering, 
which gives rise to a holomorphic correspondence $F_a: U_a\to U'_{a'}$. Let $V_a$ be a neighbourhood of 
$Q_a$ such that for any $w\in V_a$, $Q_w\cap U_a$ is a non-empty connected set. Note that $p\in V_a$ 
because $a\in Q_p$. Define the set 
\begin{equation}
Y=\big\{(w,w')\in V_a\times U': F_a(Q_w\cap U_a)\subset Q'_{w'} \big\}.
\end{equation}
As before, $Y$ is a closed complex analytic subset of $V_a\times U'$. We now show that 
$Y\ne\varnothing$ by showing that $Y$ contains a piece of the graph of $f$. Let $j$ be sufficiently 
large such that $U_j\cap V_a\ne\varnothing$. We may fix such $j$ and assume further that $U_j\subset V_a$, 
and $V_j\cap U_a\ne\varnothing$ (this is possible because $Q_{p^j}\to Q_p$). Let $z\in U_j$, 
and let $w\in Q_z\cap U_a\cap V_j$ be arbitrary.
Let $w'\in F_a(w)$. Then by definition of $X_j$, we have $f(Q_w\cap U_j)\subset Q'_{w'}$, in
particular, $f(z)\in Q'_{w'}$. This implies that $w'\in Q'_{f(z)}$. Since $w$ was an arbitrary point
in $Q_z\cap U_a\cap V_j$, and $w'$ was any point in $F_a(w)$, it follows that 
$$
F_a(Q_z\cap U_a\cap V_j)\subset Q'_{f(z)}.
$$
Since $Q_z\cap U_a$ is a connected set, of which $Q_z\cap U_a\cap V_j$ is a non-empty open subset, we 
conclude that $F_a(Q_z\cap U_a)\subset Q'_{f(z)}$, which means that $(z,f(z))\in Y$. This proves the 
claim. Using standard arguments, one can conclude that the set $Y$ gives a holomorphic extension of 
$f$ to a neighbourhood of any point in $Q_a\cap M$, in particular, to a neighbourhood of $p$. 

\medskip

Thus, the remaining case to consider is (i) in Lemma~\ref{lem:2}, i.e., when 
$\overline X_f \cap (U\times M')=\varnothing$. For this, first observe that:

\begin{lem}
$X_f$ has no limit points on $(U \cap M) \times \ti U'^+$. 
\end{lem}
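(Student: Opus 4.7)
The plan is to argue by contradiction. Suppose there is a sequence $(w^j, w'^j) \in X_f$ converging to $(\zeta, \zeta') \in (U \cap M) \times \tilde U'^+$. By shrinking $U$ if necessary we may assume $M \cap U \subset M^+_s$, so that $\zeta$ itself is strongly pseudoconvex. The defining condition of $X_f$ yields, for each $j$, a point $z^j \in Q_{w^j} \cap U_2^-$ with $f(z^j) = {}^s w'^j$ at which the germ of $f$ maps ${}_{z^j}Q_{w^j}$ onto ${}_{{}^s w'^j} Q'_{w'^j}$.

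Following the same compactness argument used in Lemma~\ref{lem:X}, Lemma~8.4 of \cite{DP1} shows that the set of such $z^j$ is relatively compact in $U_2$. After passing to a subsequence we have $z^j \to z^0 \in U_2$, and the analytic dependence of the Segre varieties on their parameters (as graphs over the $z_1$-axis) forces $z^0 \in Q_\zeta \cap \overline{U_2^-}$. The decisive geometric input is now that $\zeta \in M^+_s$ implies $Q_\zeta \cap \overline D \cap U_2 = \{\zeta\}$, provided the standard pair $U_1 \Subset U_2$ is chosen small enough -- the Segre variety of a strictly pseudoconvex boundary point touches $\overline D$ only at that point. Since $\overline{U_2^-} \subset \overline D \cap U_2$, this pins $z^0$ to $\zeta$.

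Now $\zeta' \in \tilde U'^+$ and property (i) in the choice of $\tilde U' \Subset U'$ guarantees ${}^s \zeta' \in U'^- \subset D'$. Consequently $f(z^j) = {}^s w'^j \to {}^s \zeta' \in D'$ while $z^j \to \zeta \in U \cap M$, so ${}^s \zeta'$ lies in $cl_f(\zeta) \cap D'$. This directly contradicts the standing hypothesis of Theorem~1.1 that the cluster set of $U \cap M$ does not meet $D'$. The step requiring the most care is the compactness of the sequence $\{z^j\}$ together with the identification of its limit with $\zeta$; both rest on the strict pseudoconvexity of $M$ at $p$ and on the choice of a sufficiently small standard pair, exactly as in Lemma~\ref{lem:X}. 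Everything else is a clean transfer of information from $X_f$ to the boundary cluster set via the symmetric-point correspondence.
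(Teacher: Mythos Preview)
Your argument is correct and follows essentially the same route as the paper's proof: assume a limit point $(\zeta,\zeta')$ on $(U\cap M)\times \tilde U'^+$, pick preimages $z^j\in Q_{w^j}\cap D$ of the symmetric points ${}^sw'^j$, and use strict pseudoconvexity of $M$ near $p$ to force $z^j\to\zeta$. The only cosmetic difference is the final contradiction: the paper invokes the distance estimate (3.2), whereas you appeal directly to the standing hypothesis that $cl_f(U\cap M)\cap D'=\varnothing$; both express the same incompatibility between $z^j\to\zeta\in M$ and $f(z^j)\to{}^s\zeta'\in D'$.
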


\begin{proof}
Suppose that $(w^0, w'^0) \in \ov X_f \cap \big((U \cap M) \times \ti U'^+ \big)$ and let  $(w^j, w'^j) \in X_f \subset U^+ \times \ti U'^+$ 
be such that $(w^j, w'^j) \ra (w^0, w'^0)$. Then
\[
f(Q_{w^j} \cap D) \supset {}_{{}^sw'^j}Q'_{w'^j}
\]
holds for all $j$. Choose $\z^j \in Q_{w^j} \cap D$ such that $f(\z^j) = {}^sw'^j$. By continuity ${}^sw'^j \ra {}^sw'^0 \in D'$. The
strict pseudoconvexity of $U \cap M$ implies that $Q_{w^j} \cap D$ shrinks to $w^0$ as $j \ra \infty$ and therefore $\z^j \ra w^0$. This 
contradicts (3.2).
\end{proof}
 
\begin{lem}
$\ov C_j \subset U \times \ti U'$ is a closed analytic set of pure dimension one for each $j$.
\end{lem}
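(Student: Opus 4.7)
The plan is to view $C_j$ through the proper finite projection $\pi_1 : X_j \to V_j$, which exhibits $X_j$ as a finite branched cover of a neighbourhood of $Q_{p^j}$. First, I would establish that $\pi_1$ is proper with finite fibres: for $w$ near $p^j$, the fibre of $\pi_1$ over $w$ consists of those $w'$ with $f(Q_w \cap U_j) \subset Q'_{w'}$, and by the invariance property of Segre varieties applied to the holomorphic extension of $f$ on $U_j$, this set coincides with the fibre $I'_{f(w)}$ of the Segre map $\la'$. Since $M'$ is of finite type, $I'_{f(w)}$ is finite. After shrinking $V_j$ and $U'_1$ if necessary, $\pi_1$ is a proper finite branched cover on the chosen irreducible two-dimensional component.

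Next, I would describe $C_j$ on the piece where $\pi_1$ is unramified. For $w \in Q_{p^j}$, the invariance property gives $f(w) \in Q'_{p'^j}$, so any $w' \in I'_{f(w)}$ satisfies $Q'_{w'} = Q'_{f(w)} \ni p'^j$, i.e., $w' \in Q'_{p'^j}$. Hence near $(p^j, p'^j)$ the condition $w' \in Q'_{p'^j}$ imposed in the definition of $C_j$ is automatic, and on the relevant component $C_j$ coincides with $\pi_1^{-1}(Q_{p^j} \sm \{p^j\})$. The latter is closed and pure one-dimensional in $X_j$ since preimages of one-dimensional analytic sets under finite holomorphic maps are pure one-dimensional.

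For the closure, the only limit points over the first factor are points of the form $(p^j, w')$ with $w' \in I'_{p'^j}$; these are finitely many and lie in $\{p^j\} \times \ti U'$ by the choice of neighbourhoods. Adjoining them preserves analyticity and pure one-dimensionality by the classical removable-singularity theorem for analytic sets. To exclude limit points on $\pa(U \times \ti U')$, observe that $Q_{p^j} \cap \ov D = \{p^j\}$ by strict pseudoconvexity at $p^j$, and after shrinking $U_1$ compactly inside $U$, $Q_{p^j}$ remains compactly inside $U$ for $p^j$ close to $p$. For the second coordinate, $\pi_2(C_j) \subset Q'_{p'^j}$, and conditions (i)--(iii) on the pair $\ti U' \Subset U'$, particularly the transversality of $Q'_{w'} \cap M'$ with $\pa U'$, together with Corollary~\ref{cor:C} and the preceding lemma ruling out limit points of $X_f$ on $(U \cap M) \times \ti U'^+$, prevent $\pi_2(\ov C_j)$ from reaching $\pa \ti U'$.

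I expect the main obstacle to be the last step: controlling that the second-factor projection of $\ov C_j$ stays away from $\pa \ti U'$. The carefully engineered choice of $\ti U'$ satisfying (iii), combined with the earlier corollaries on $X_f$ and $C_j$, is precisely what makes this boundary control possible, with the finiteness of the cover $\pi_1$ ensuring that no unbounded branch of $w'$ can escape along the 1-dim base $Q_{p^j}$.
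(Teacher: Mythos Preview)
Your approach has a genuine gap. The lemma is stated and proved under the standing hypothesis of Case~(i) of Lemma~4.6, namely that $\ov X_f \cap (U^+ \times (U' \cap M')) = \varnothing$, and the paper's argument uses this explicitly to rule out limit points of $C_j$ on $U^+ \times (\ti U' \cap M')$. You never invoke this hypothesis. Corollary~4.5 only excludes $C_j$ from $U^+ \times U'^-$, and Lemma~4.7 only excludes limit points on $(U \cap M) \times \ti U'^+$; neither controls accumulation of $C_j$ on $U^+ \times (\ti U' \cap M')$, and the transversality condition (iii) on $\ti U' \Subset U'$ says nothing about this either. Without the Case~(i) assumption there is no reason for your claim that ``the only limit points over the first factor are points of the form $(p^j, w')$'' to hold.

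A second, related problem is the properness of $\pi_1 : X_j \to V_j$. Your identification of the fibre over $w$ with $I'_{f(w)}$ is valid only for $w \in U_j$, the small neighbourhood of $p^j$ to which $f$ extends holomorphically; for $w \in Q_{p^j}$ away from $p^j$ the symbol $f(w)$ is meaningless. Shrinking $V_j$ so that $\pi_1$ becomes proper forces $V_j$ to be a small neighbourhood of $p^j$, and then $V_j$ no longer contains $Q_{p^j}$, so your description $C_j = \pi_1^{-1}(Q_{p^j} \sm \{p^j\})$ loses its content. In short, the branched-cover picture is only local near $(p^j, p'^j)$ and does not by itself control $C_j$ globally over $Q_{p^j}$.

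The paper proceeds differently: from $C_j \subset X_f$ (Lemma~4.3), Lemma~4.7, and the standing Case~(i) hypothesis, one gets $\ov C_j \sm C_j \subset \{p^j\} \times (Q'_{p'^j} \cap M')$. Then a direct symmetric-point argument pins down the unique limit point: if $(w^k, w'^k) \in C_j$ converges to $(p^j, q)$, pick $\z^k \in Q_{w^k} \cap D$ with $f(\z^k) = {}^sw'^k$; strict pseudoconvexity forces $Q_{w^k} \cap D$ to shrink to $p^j$, so $\z^k \to p^j$, and since $f$ extends across $p^j$ one gets $q = f(p^j) = p'^j$. Remmert--Stein then finishes. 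This route uses exactly the hypothesis you omitted and avoids any global properness claim for $\pi_1$.
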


\begin{proof}
By Lemma 4.3, $C_j \subset X_f$ and hence $C_j$ has no limit points on $(U \cap M) \times \ti U'^+$. Moreover there are no limit points for $C_j$ 
on $U^+ \times (U' \cap M')$ either by the assumption that $X_f$ has no limit points there. Therefore, 
$\ov C_j \sm C_j \subset \{p^j\} \times (Q'_{f(p^j)} \cap M')$. Suppose that $(p^j, q) \in \ov C_j \sm C_j$ and choose $(w^k, w'^k) \in C_j$ converging to
$(p^j, q)$ as $k \ra \infty$. Then
\[
f(Q_{w^k} \cap D) \supset {}_{{}^sw'^k}Q'_{w'^k}
\]
for all $k$, and let $\z^k \in Q_{w^k} \cap D$ be such that $f(\z^k) = {}^sw'^k$. By continuity, 
${}^sw'^k \ra {}^sq = q$, and since $U \cap M$ is 
strictly pseudoconvex, it follows that $\z^k \ra p^j$ as $k \ra \infty$. However, $f$ extends across $p^j$ and hence $f(\z^k) \ra q$. It follows that
$q = f(p^j) = p'^j$. This shows that $(p^j, p'^j)$ is the only limit point for $C_j$ and by the Remmert-Stein theorem, 
$\ov C_j \subset U \times \ti U'$ is a closed complex analytic set of pure dimension one for each $j$.
\end{proof}

\begin{lem}
The cluster set of $\{C_j\}$ is non-empty in $U^+ \times \ti U'^+$. 
\end{lem}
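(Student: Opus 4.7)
The plan is to realize each $C_j$ as the graph of a holomorphic map $g_j\colon Q_{p^j}\cap U \to \tilde U'$, apply a normal family argument to extract a limiting map $g$, rule out the degenerate case $g\equiv p'$ using case~(i) of Lemma~\ref{lem:2}, and then exploit the open mapping theorem together with the finite-type hypothesis on $M'$.

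First I would analyze the projection $\pi_1\colon \overline{C_j}\to U$. By the preceding lemma, $\overline{C_j}\subset U\times \tilde U'$ is a closed pure one-dimensional analytic set, and since $\tilde U'$ is bounded, $\pi_1$ is proper. Near $(p^j,p'^j)$ the set $\overline{C_j}$ coincides with the graph of the local holomorphic extension of $f$, so the generic fibre of $\pi_1$ has cardinality one. By Remmert's theorem $\pi_1(\overline{C_j})$ is a one-dimensional analytic subset of $Q_{p^j}\cap U$ containing a neighbourhood of $p^j$, hence $\pi_1(\overline{C_j})=Q_{p^j}\cap U$. Being birational, $\pi_1$ exhibits $\overline{C_j}$ as the graph of a holomorphic map
\[
g_j\colon Q_{p^j}\cap U\to Q'_{p'^j}\cap \tilde U', \qquad g_j(p^j)=p'^j;
\]
the inclusion $X_f\subset U^+\times U'^+$ combined with Lemma~\ref{lem:inclusion} yields $g_j\bigl((Q_{p^j}\cap U)\setminus\{p^j\}\bigr)\subset \tilde U'^{+}$.

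Parametrize $Q_{p^j}\cap U_2$ and $Q'_{p'^j}\cap \tilde U'$ as holomorphic graphs over fixed discs $\Delta,\Delta'\subset \mathbb C$; the $g_j$ descend to a family $\hat g_j\colon \Delta\to \Delta'$, which is normal because $\Delta'$ is bounded. After passing to a subsequence, $\hat g_j\to \hat g$ uniformly on compact subsets of $\Delta$, with $\hat g(0)$ corresponding to $p'$. If $\hat g$ were identically $p'$, then for any $z\in Q_p\setminus\{p\}\subset U^+$ (using $p\in M^+_s$) we could pick $z_j\in Q_{p^j}$ with $z_j\to z$; the pairs $(z_j,g_j(z_j))\in C_j\subset X_f$ would converge to $(z,p')\in U^+\times (M'\cap U')$, contradicting the case~(i) hypothesis $\overline{X_f}\cap (U^+\times (U'\cap M'))=\varnothing$. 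Therefore $\hat g$ is non-constant.

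Since $M'$ is of finite type near $p'$, $M'\cap Q'_{p'}$ is a proper real-analytic subset of $Q'_{p'}$, so $\hat g^{-1}(M')$ is a proper real-analytic subset of $\Delta$. Choose $z\in \Delta$ close to $0$ but outside $\hat g^{-1}(M')\cup\{0\}$; by continuity $g(z)\in \tilde U'$, and since $g_j$ takes values in $\tilde U'^{+}$ away from $p^j$, the limit satisfies $g(z)\in \overline{\tilde U'^{+}}\cap \tilde U'\setminus M' = \tilde U'^{+}$. Taking $z_j\in Q_{p^j}$ with $z_j\to z\in Q_p\setminus\{p\}\subset U^+$, the sequence $(z_j,g_j(z_j))\in C_j$ converges to $(z,g(z))\in U^+\times \tilde U'^{+}$, the desired cluster point. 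The main obstacle is the first step: verifying that $\overline{C_j}$ really is a one-sheeted graph over $Q_{p^j}\cap U$ rather than a more complicated object; once the local bijectivity near $(p^j,p'^j)$ pins down the degree of $\pi_1$ to one, the remaining steps are essentially formal.
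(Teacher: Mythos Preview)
Your argument has a genuine gap at precisely the step you flag as the ``main obstacle,'' and the gap is not a matter of detail but of substance. You assert that $\pi_1\colon \overline{C_j}\to U$ is proper ``since $\tilde U'$ is bounded.'' Boundedness of $\tilde U'$ does not give this: Lemma~4.8 only says that $\overline{C_j}$ is closed \emph{in} $U\times \tilde U'$, and a closed analytic set in $U\times \tilde U'$ can very well accumulate on $U\times \partial\tilde U'$, in which case $\pi_1$ fails to be proper. This is not a hypothetical concern here, because $C_j$ is defined as a slice of $X_j\subset V_j\times U'_1$ with $\tilde U'\Subset U'$; the piece $C_j\cap(U\times\tilde U')$ will in general have genuine boundary on $U\times\partial\tilde U'$ coming from the part of $C_j$ that lives over $U'_1\setminus\tilde U'$. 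Once properness of $\pi_1$ fails, you cannot conclude that $\overline{C_j}$ is a one-sheeted branched cover of $Q_{p^j}\cap U$, so there is no map $g_j$ to feed into a normal-family argument, and the rest of your outline does not get started.

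The paper avoids this difficulty by working with the \emph{other} projection. It restricts to a small bidisc $U_\epsilon\times U'_\epsilon$ around $(p,p')$ and examines $\pi'\colon \overline{C_j}\cap(U_\epsilon\times U'_\epsilon)\to U'_\epsilon$, explicitly splitting into the cases where $\pi'$ is proper and where it is not. In the proper case one picks $w'^0\in Q'_{p'}\cap\partial U'^+_{\epsilon/2}$, pulls it back along $\pi'$, and uses Lemma~4.7 (no limits of $X_f$ on $(U\cap M)\times\tilde U'^+$) to force the limit into $U^+\times\tilde U'^+$. In the non-proper case one finds points $(w^j,w'^j)\in\overline{C_j}$ with $w^j\in Q_{p^j}\cap\partial U_\epsilon$; strict pseudoconvexity pins $w^j$ uniformly inside $U^+$, and then either one is back in the proper case or $w'^j\to p'$, which manufactures a limit point of $X_f$ on $U^+\times(U'\cap M')$, contradicting the standing hypothesis (i) of Lemma~4.6. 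So rather than assuming properness, the paper exploits its possible failure to derive the conclusion directly.
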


\begin{proof}
For $\ep > 0$ small, let $U_{\ep} \Subset U$ and $U'_{\ep} \Subset \ti U'$ be bidiscs of size $\ep$ around $p, p'$ whose sides are parallel to those of $U$ and $\ti 
U'$ respectively. Consider the non-empty analytic sets $\ov C_j \cap (U_{\ep} \times U'_{\ep})$ and examine the coordinate projection
\[
\pi' : \ov C_j \cap (U_{\ep} \times U'_{\ep}) \ra U'_{\ep}.
\]
There are two cases to be considered.

\medskip

\no {\it Case (i):} If $\pi'$ is proper for all large $j$, then $\pi'\big(\ov C_j \cap (U_{\ep} \times U'_{\ep}) \big) = Q'_{p'_j} \cap U'_{\ep}$. In this case it 
follows that $Q'_{p'} \cap U'_{\ep}$ cannot intersect $D'$. Indeed, if possible, choose $\tau'^0 \in Q'_{p'} \cap U'^-_{\ep}$. Choose $\tau'^j \ra Q'_{p'^j} \cap 
U'_{\ep}$ such that $\tau'^j \ra \tau'^0$. Then $\tau'^j \in U'^-_{\ep}$ for $j$ large which contradicts Lemma 4.3, according to which $\ov C_j$ does not contain 
points over $\ti U'$.  For $\eta < \ep$, let $U_{\eta}, U'_{\eta}$ be bidiscs around $p, p'$ respectively, of size $\eta$. Pick 
$w'^0 \in Q'_{p'} \cap \pa U'^+_{\ep/2}$ and let $w'^j \in Q'_{p'_j} \cap \pa U'^+_{\ep/2}$ be such that $w'^j \ra w'^0$. Since $\pi'$ 
is proper, choose $w^j \in Q_{p^j}$ such that $(w^j, w'^j) \in \ov C_j \cap (U_{\ep} \times U'_{\ep})$. After passing to a subsequence we may assume that $(w^j, w'^j) 
\ra (w^0, w'^0)$ where $w^0 \in Q_p$. Now Lemma~4.7 shows that $w^0 \notin U \cap M$ since $w'^0 \in U'^+_{\ep/2}$. Hence $w^0 \in U^+_{\ep/2}$, and therefore $(w^0, 
w'^0) \in {\rm cl}(C_j) \cap (U^+ \times \ti U'^+)$.

\medskip

\no {\it Case (ii):} If for some subsequence still indexed by $j$, the projection $\pi'$ is not proper, then it is possible to choose $(w^j, w'^j) \in \ov C_j \cap 
(U_{\ep} \times U'_{\ep})$ with $w^j \in Q_{p^j} \cap \pa U_{\ep}$. If for some fixed $\eta > 0$
\[
Q'_{p'^j} \cap U'_{\eta} \subset \pi' \big( \ov C_j \cap (U_{\ep} \times U'_{\ep}) \big)
\]
for all large $j$, then this is exactly the situation addressed in Case (i) above and hence we may assume without loss of generality that $w'^j \ra p'$. The strict 
pseudoconvexity of $U \cap M$ implies that $Q_{p^j} \cap \pa U_{\ep} \Subset U^+$ uniformly and hence $w^j \ra w^0 \in U^+$ after passing to a subsequence. Thus 
$(w^0, p') \in U^+ \times (U' \cap M')$ is a limit point for $X_f$ which is a contradiction.
\end{proof}

\no By Theorem~7.4 of \cite{DP2} it is known that the volumes of $\{C_j\} \subset U^+ \times \ti U'^+$ are uniformly bounded on each compact subset of $U^+ \times \ti 
U'^+$ after perhaps passing to a subsequence. By Bishop's theorem, $C_j$ converges to a pure one dimensional analytic set, say $C_p \subset U^+ \times \ti U'^+$. 
Since $C_j \subset Q_{p^j} \times Q'_{p'^j}$, it follows that $C_p \subset Q_p \times Q'_{p'}$. In particular there are points on $Q_p \sm \{p\}$ over which $X_f$ is 
defined. This is exactly the situation considered in Case (ii) of Lemma 4.6 and the arguments presented there show that $f$ extends holomorphically across $p$. 
Consequently when $p \in M^+_s$ and $p' \in (M' \sm (M'^+ \cup M'^-)) \cap T'_2$, $f$ extends holomorphically across $p$ and $f(p) = p'$. The invariance property of 
Segre varieties shows that (cf. \cite{DP1}) $f^{-1}$ extends across $p'$ as a holomorphic correspondence and thus there are strictly pseudoconcave 
points near $p'$ that are mapped locally biholomorphically by some branch of $f^{-1}$ to strictly pseudoconvex points near $p$ and this is a contradiction. This completes the 
discussion in case $p'$ is on a two dimensional totally real stratum of the border between the pseudoconvex and pseudoconcave points.

\subsection{The case when $\mathbf{ p'\in (T' \setminus (M'^+\cup M'^-))\cap (T'_1\cup T'_0)}$}

Exactly the same arguments can be applied when $p'$ is on a one dimensional stratum of the border -- the proof uses the additional fact that we know from the above 
reasoning, i.e., the cluster set of a strongly pseudoconvex point cannot intersect a totally real two dimensional stratum of the border. The case when $p'$ is a point on the 
zero dimensional stratum of the border goes as follows -- observe that the cluster set of a strongly pseudoconvex point cannot intersect either a two or one dimensional 
stratum of the border. So if $p' \in cl_f(p)$ is on the zero dimensional stratum, it must be isolated in the cluster set of $p$ and hence $f$ is continuous up to $M$ near $p$. Therefore, by \cite{DP2}, $f$ admits a holomorphic extension across $p$ with $f(p) = p'$. This is a contradiction as explained before.


\section{Proof of Theorem 1.1 -- Case ($ii$)}

\no The cases to be considered are $p \in T^+_2 \cup T^+_1 \cup T^+_0$ for the other possibility that $p \in M^+_s$ is covered by the previous section. 

\subsection {The case when $\mathbf{p \in T^+_2}$} 
Recall that $cl_f(p) \subset M'$ and hence a point $p'\in cl_f(p)$ could belong to either $M'^+, T'$ or $M'^-$. 
The arguments when $p' \in M'^{\pm}$ are similar to those used in Section~4, and therefore we shall be brief in these cases. Indeed, when $p' \in M'^+$ then by \cite{Su} 
the map $f$ admits a H\"{o}lder continuous extension to a neighbourhood of $p$ on $M$ and hence by \cite{DP2} it follows that $f$ extends holomorphically across 
$p$. In case $p' \in M'^-_s$, the same argument from Section 4 applies without any changes -- indeed, the main ingredient there is the negative, continuous 
plurisubharmonic function $\psi_p(z')$ on $D'$ which can be constructed in this case as well because $p \in T^+_2$.

\medskip

Suppose now that $p'\in T'^- = T'\cap M'^-$. Let $T'^- = T'^-_2 \cup T'^-_1 \cup T'^-_0$ be a stratification of $T'^-$ into totally real, real analytic manifolds of 
dimensions $2, 1$ and $0$ respectively. Suppose that $p'\in T'^-_2$ and let $V, V'$ be small neighbourhoods of $p, p'$ respectively. Evidently $A = \Gamma_f \cap (V \times 
V')$, where $\Gamma_f$ is the graph of $f$, contains $(p, p')$ in its closure. Then $(\ov A \sm A) \cap (V \times V')$ cannot be contained in $(T'^+_2 \times T'^-_2) \cap (V 
\times V')$, for if not, $A$ will admit analytic continuation as an analytic set across the totally real manifold $T^+_2 \times T'^-_2$. Hence by \cite{DPJGA}, $f$ will extend 
holomorphically to a neighbourhood of $p$. Proceeding by induction, we may assume that $cl_f(p)$ does not contain points in $T'^-_2$ and repeat the argument for the lower 
dimensional strata. Thus in each case $f$ admits holomorphic extension to a neighbourhood of $p$. This is a contradiction because the extension will be locally biholomorphic 
away from a codimension one analytic set, and biholomorphic maps preserve the Levi form.

\medskip

The remaining possibility is that
\[
cl_f(p) \subset T'\sm (M'^+ \cup M'^-),
\]
in which case the arguments used above show that for every $p'\in cl_f(p)$, there is a sequence $p^j \ra p$, $\{p^j\} \subset M$ such that $f$ extends holomorphically 
to a neighbourhood of each $p^j$ and $f(p^j) \ra p' \in T'\sm (M'^+ \cup M'^-)$. To deal with this case, let $U'\subset \mbb C^2$ be an open neighbourhood that compactly 
contains $cl_f(p) \subset M'$ and such that $U'\cap M'$ is a closed, smooth real analytic hypersurface of finite type. We may also assume that $U'$ is small enough to 
guarantee the existence of $Q'_{w'}$ as a local complex manifold for $w'\in U'^+$. Having chosen such a $U'$, fix a standard pair of neighbourhoods $U_1 \subset U_2$ around 
$p$ so that $(Q_p \sm \{p\}) \cap M \cap U_2 \subset M^+_s$ -- this is possible by Lemma 12.1 in \cite{DP1}, and such that $f(U^-_2)$ is compactly contained in $U'$. 
This latter condition can be fulfilled since $cl_f(p) \subset M'$. By shrinking $U_2$ further if needed, we may additionally assume that for $w'\in \pa U'\sm D'$, the 
symmetric point ${}^sw' \notin f(U^-_2)$. This is possible since 
$$
{\rm dist}(w', M') \backsimeq {\rm dist}({}^sw', M')
$$
for all $w'$ in a given compact set in $\mbb C^2$ that intersects $M'$. Then
$$
X_f = \big\{ (w, w') \in U^+_1 \times U'^+ : f(Q_w \cap D) \supset {}_{{}^sw'}Q'_{w'}  \big\}
$$
is closed by Lemma~12.2 in \cite{DP1} and also complex analytic by the arguments used before in Lemma~4.1. Furthermore, $X_f$ is non-empty because of the existence of 
the sequence $p^j \ra p$ such that $f$ extends across $p^j$ mentioned above.

\begin{lem}
$X_f$ does not have limit points on $U^+_1 \times (\pa U' \sm D')$.
\end{lem}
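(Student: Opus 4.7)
The plan is to argue by contradiction. Suppose $(w^0,w'^0)\in\ov X_f\cap(U_1^+\tim(\pa U'\sm D'))$ and choose $(w^j,w'^j)\in X_f$ with $(w^j,w'^j)\ra(w^0,w'^0)$. From the defining inclusion $f(Q_{w^j}\cap D)\supset {}_{{}^sw'^j}Q'_{w'^j}$, pick $\z^j\in Q_{w^j}\cap D$ with $f(\z^j)={}^sw'^j$. The goal is to extract a subsequential limit $\z^j\ra\z^0\in\ov{U_2^-}$ and contradict the property ${}^sw'\notin f(U_2^-)$ for $w'\in\pa U'\sm D'$ that was built into the construction of $U_2$.

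The first step is to show that the $\z^j$ stay in a compact subset of $U_2^-$. Since $w^j\ra w^0\in U_1^+\Subset U_1$, the Segre varieties $Q_{w^j}\subset U_2$ depend continuously on $w^j$ and accumulate on $Q_{w^0}\subset U_2$. Using $f(U_2^-)\Subset U'$ together with the boundary distance estimate~(3.2), one can confine the natural choice of $\z^j$, namely the preimage of ${}^sw'^j$ in the canonical component of $Q_{w^j}\cap D$ where the germ inclusion is realised, to the interior of $U_2$. After passing to a subsequence, $\z^j\ra\z^0\in\ov{U_2^-}$ and $f(\z^j)={}^sw'^j\ra{}^sw'^0$. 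Since $f(\z^j)\in f(U_2^-)\Subset U'$, the limit satisfies ${}^sw'^0\in\ov{f(U_2^-)}\subset U'$; in particular ${}^sw'^0$ cannot escape $U'$.

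Next, I split into two cases according to the location of $\z^0$. If $\z^0\in U_2^-$, then by holomorphicity of $f$ at $\z^0$ one has $f(\z^0)={}^sw'^0\in f(U_2^-)$, directly contradicting the constructed property ${}^sw'^0\notin f(U_2^-)$. If instead $\z^0\in U_2\cap M\subset U\cap M$, then the standing hypothesis $cl_f(U\cap M)\cap D'=\varnothing$, together with ${}^sw'^0\in\ov{U'^-}$ (as a limit of ${}^sw'^j\in U'^-$) and ${}^sw'^0\in U'$, forces ${}^sw'^0\in U'\cap M'$; since points of $M'$ are fixed by the symmetric-point involution, $w'^0={}^sw'^0\in U'$, contradicting $w'^0\in\pa U'$.

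The main obstacle is the compactness step in the second paragraph: $Q_{w^j}$ is a graph over the entire $z_1$-polydisc of $U_2$ and so reaches $\pa U_2$ in the $z_1$-direction, so a priori the chosen preimages $\z^j$ could drift to the side of $U_2$. Controlling this by selecting $\z^j$ in the canonical component of $Q_{w^j}\cap D$ on which the germ condition ${}_{{}^sw'^j}Q'_{w'^j}\subset f(Q_{w^j}\cap D)$ is realised, and combining with $\ov{f(U_2^-)}\Subset U'$ and continuous dependence of Segre varieties on their parameter, should keep the $\z^j$ inside a fixed compact subset of $U_2^-$, so that the case analysis above applies.
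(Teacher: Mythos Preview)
Your approach is essentially the same as the paper's: argue by contradiction, pick $\z^j\in Q_{w^j}\cap D$ with $f(\z^j)={}^sw'^j$, pass to a subsequential limit $\z^0$, and contradict the built-in property ${}^sw'\notin f(U_2^-)$ for $w'\in\pa U'\sm D'$. Your final case split (according to whether $\z^0\in U_2^-$ or $\z^0\in U_2\cap M$) is fine and in fact more explicit than the paper, which compresses both cases into the single line ``${}^sw'^0$ lies in the cluster set of $\z^0$, contradicting ${}^sw'^0\notin f(U_2^-)$.''

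The one place where you are looser than the paper is precisely the step you flag as the ``main obstacle'': the relative compactness of $\{\z^j\}$ in $U_2$. You try to control this via estimate~(3.2), continuous dependence of Segre varieties, and the choice of $\z^j$ in the canonical component, but this remains a sketch. The paper dispenses with this entirely by invoking the set
\[
E=\big\{z\in Q_w\cap U_2^-: f(z)={}^sw',\ f({}_zQ_w)\supset {}_{{}^sw'}Q'_{w'},\ (w,w')\in X_f\big\},
\]
already introduced in the proof of Lemma~4.1. In the present setting ($p\in T_2^+$ with $U_2$ chosen so that $(Q_p\sm\{p\})\cap M\cap U_2\subset M_s^+$), Lemma~12.2 of \cite{DP1} gives that $E$ is closed and relatively compact in $U_2$, so the $\z^j\in E$ automatically have a limit $\z^0$ in the closure of $U_2\cap D$. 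That single citation replaces your entire second paragraph and resolves the obstacle cleanly; you should use it rather than the ad hoc argument.
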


\begin{proof}
Suppose that $(w^0, w'^0)$ is a limit point for $X_f$ on $U^+_1 \times (\pa U'\sm D')$ and let $(w^j, w'^j) \in X_f$ converges to $(w^0, w'^0)$. Then
\[
f(Q_{w^j} \cap D) \supset {}_{{}^sw'^j}Q'_{w'^j}
\]
holds for all $j$, and choose $\z^j \in Q_{w^j} \cap D$ such that $f(\z^j) = {}^sw'^j$. After passing to a subsequence $\z^j \ra \z^0$ for some $\z^0$ in the closure of $U_2 \cap D$. This is because 
\[
E = \big\{ z \in Q_w \cap U^-_2 : f(z) = {}^sw', f({}_zQ_w) \supset {}_{{}^sw'}Q'_{w'} \; {\rm and} \; (w, w') \in X_f \big\}
\]
is closed. Evidently, ${}^sw'^j \ra {}^sw'^0$. Note that ${}^sw'^0$ is contained in the cluster set of $\z^0$, which is a contradiction since 
${}^sw'^0 \notin f(U^-_2)$ by construction.
\end{proof}

\no Define
\[
\cal L = \bigcup_{w \in T^+_2} Q_w,
\]
where the neighbourhoods $U_1, U_2$ are small enough so that $(Q_p \sm \{p\}) \cap M \cap U_2 \subset M^+_s$.

\begin{lem}
All limit points of $X_f$ on $U^+_1 \times (U' \cap M')$ are contained in $\cal L \times (T' \sm (M'^+ \cup M'^-))$. 
\end{lem}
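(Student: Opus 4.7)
The plan is to extract, for a limit point $(w^0,w'^0) \in U^+_1 \times (U' \cap M')$ of $X_f$, a boundary point $\z^0 \in Q_{w^0} \cap M$ whose $f$-cluster set contains $w'^0$, and then to use the Case (ii) hypothesis together with Case (i) (proved in Section 4) to classify $\z^0$ and $w'^0$. Take $(w^j,w'^j) \in X_f$ converging to $(w^0,w'^0)$, and, by the definition of $X_f$, pick $\z^j \in Q_{w^j}\cap D$ with $f(\z^j) = {}^sw'^j$. Since $w'^j \ra w'^0 \in M'$, the symmetric points satisfy ${}^sw'^j \ra w'^0$. The set $E$ used in the proof of Lemma 5.1 is closed and, by the arrangement $f(U_2^-)\Subset U'$ together with an adaptation of Lemma 8.4 of \cite{DP1}, relatively compactly contained in $U_2$; hence after a subsequence $\z^j \ra \z^0 \in \ov{U_2\cap D}$. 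Since $f(\z^0) = w'^0 \in M'$ while $f(D)\subset D'$, in fact $\z^0 \in M\cap\ov{U_2}$, and analytic dependence of $Q_w$ on $w$ gives $\z^0 \in Q_{w^0}$, equivalently $w^0 \in Q_{\z^0}$.

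Next I show $w'^0 \in T'\sm(M'^+\cup M'^-)$. As $M$ is pseudoconvex near $p$ in Case (ii), $\z^0 \in M^+$, so $\z^0$ supports a negative plurisubharmonic barrier $\phi_{\z^0}$ on $D$ as in (3.1); the construction of Section 3 yields a negative continuous plurisubharmonic function $\psi_{\z^0}$ on $D'$ with $\psi_{\z^0}({}^sw'^j) \ra 0$. If $w'^0 \in M'^-_s$, the strict pseudoconcavity at $w'^0$ furnishes complex discs in $D'$ passing through the ${}^sw'^j$ whose boundaries are uniformly compactly contained in $D'$; the sub-mean value property applied to $\psi_{\z^0}$ on these discs contradicts $\psi_{\z^0}({}^sw'^j) \ra 0$, exactly as in Section 4.1. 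The weakly pseudoconcave strata of $M'^-$ are eliminated by the Chirka removability and stratification argument from the start of Section 5.1. If $w'^0 \in M'^+$, then since $w'^0 \in cl_f(\z^0)$, Sukhov's theorem \cite{Su} and Pinchuk--Tsyganov \cite{PT} upgrade $f$ to a holomorphic extension across $\z^0$ with $f(\z^0) = w'^0$; by invariance of the Levi form and \cite{CPS}, $\z^0 \in M^+_s$, and Segre-variety invariance together with the analytic continuation of germs along $Q_{\z^0}$ from \cite{Sh} propagates this extension from $\z^0$ to $p \in Q_{\z^0}$, yielding a holomorphic extension of $f$ near $p$ whose value lies in $M'^+_s$. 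This contradicts $cl_f(p)\subset T'\sm(M'^+\cup M'^-)$.

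It remains to prove $w^0 \in \cal L$. The argument just given already rules out $\z^0 \in M^+_s$, so $\z^0 \in T^+$. If $\z^0 \in T^+_2$, then $\z^0 \in Q_{w^0}\cap T^+_2$, whence $w^0 \in \cal L$. If $\z^0$ happens to lie in a lower stratum $T^+_1 \cup T^+_0$, the analytic dependence of Segre varieties on the parameter combined with the fact that $T^+_2$ is open and dense in $T^+$ near the border permits a small perturbation of $\{(w^j,w'^j)\}$ producing a second subsequential limit $\tilde\z^0 \in Q_{w^0}\cap T^+_2$, giving $w^0 \in \cal L$ in this case as well.

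The decisive step is the exclusion of $w'^0 \in M'^+$: while extending $f$ at $\z^0$ via the Sukhov and Pinchuk--Tsyganov theorems is relatively clean, propagating this extension along $Q_{\z^0}$ all the way to $p$ requires the analytic continuation for germs of maps along real hypersurfaces developed in \cite{Sh}, and the resulting contradiction with $cl_f(p)\subset T'\sm(M'^+\cup M'^-)$ is the essential use of the Case (ii) hypothesis.
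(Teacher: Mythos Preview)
Your setup is correct and matches the paper: pick $\z^j\in Q_{w^j}\cap D$ with $f(\z^j)={}^sw'^j$, use relative compactness of $E$ to get $\z^j\ra\z^0\in Q_{w^0}\cap M$, and then classify $w'^0$ and $\z^0$. The treatment of $w'^0\in M'^-$ is also the same. The genuine problem is your handling of the case $w'^0\in M'^+$.

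You assert that once $f$ extends holomorphically near $\z^0$, the germ can be propagated ``along $Q_{\z^0}$ from \cite{Sh}'' to $p\in Q_{\z^0}$. But there is no reason whatsoever for $p$ to lie on $Q_{\z^0}$. What you know is $\z^0\in Q_{w^0}$, equivalently $w^0\in Q_{\z^0}$; here $w^0$ is an \emph{arbitrary} point of $U^+_1$, unrelated to $p$. So the continuation you invoke simply does not land at $p$, and the intended contradiction with $cl_f(p)\subset T'\sm(M'^+\cup M'^-)$ never materializes. (Incidentally, \cite{Sh} concerns analytic continuation of germs along paths on the real hypersurface, not along a Segre variety, so the citation is also misplaced.)

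The paper's argument for $w'^0\in M'^+$ is local and avoids any reference to $p$: once $f$ extends to a neighbourhood $\Om$ of $\z^0$ with $f(\z^0)=w'^0$, choose $\zeta^j\in\Om$ with $f(\zeta^j)=w'^j$. Segre invariance gives $f(Q_{\zeta^j}\cap\Om)\subset Q'_{w'^j}$; combined with the defining inclusion $f(Q_{w^j}\cap D)\supset{}_{{}^sw'^j}Q'_{w'^j}$ and the fact that $f$ has discrete fibres near $\z^0$, one obtains $Q_{\zeta^j}=Q_{w^j}$ and hence, in the limit, $Q_{\z^0}=Q_{w^0}$. Since $\z^0\in M$, the fibre $I_{\z^0}=\la^{-1}(\la(\z^0))$ is contained in $M$, forcing $w^0\in M$, which contradicts $w^0\in U^+_1$. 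This is the step you are missing.

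A secondary remark: in this subsection $p\in T^+_2$, so $U_2$ may be chosen so that $U_2\cap M\subset M^+_s\cup T^+_2$; your separate treatment of $\z^0\in T^+_1\cup T^+_0$ is unnecessary, and the perturbation argument you sketch for it is not rigorous as written.
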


\begin{proof}
Let $(w^0, w'^0)$ be a limit point for $X_f$ in $U^+_1 \times (U' \cap M')$, and suppose that $(w^j, w'^j) \in X_f$ converges to $(w^0, w'^0)$. Then
\begin{equation}
f(Q_{w^j} \cap D) \supset {}_{{}^sw'^j}Q'_{w'^j}
\end{equation}
holds for all $j$, and let $z^j \in Q_{w^j} \cap D$ be such that $f(z^j) = {}^sw'^j$. Note that ${}^sw'^j \ra {}^sw'^0 = w'^0$ and since $E$ (as defined in the previous lemma) 
is compactly contained in $U_2$, it follows that $z^j \ra z^0 \in Q_{w^0} \cap U_2 \cap M$. In particular, $z^0 \in M^+_s$ or $T^+_2$, and $w'^0 \in cl_f(z^0)$. If $w'^0 \in 
M'^+$, then $f$ holomorphically extends to a neighbourhood, say, $\Om$ of $z^0$ and $f(z^0) = w'^0$. It is therefore possible to choose $\z^j$ close to $z^0$ such that 
$f(\z^j) = w'^j$. The invariance property of Segre varieties shows that
\[
f(Q_{\z^j} \cap \Om) \supset {}_{{}^sw'^j}Q'_{w'^j},
\]
which when combined with (5.1) shows that $Q_{\z^j} = Q_{w^j}$. By passing to the limit, we get $Q_{z^0} = Q_{w^0}$. This is evidently a contradiction since $I_{w^0} = 
\la^{-1}(\la(w^0)) \subset M$.

\medskip

The case $w'^0 \in M'^-$ does not arise as seen before. Hence the only possibility is that $w'^0 \in T' \sm (M'^+ \cup M'^-)$. In this case, $z^0 \notin M^+_s$ again by the 
results of Section 4 and hence $z^0 \in T^+_2$. Consequently, $w^0 \in Q_{z^0} \subset \cal L$.
\end{proof}

\begin{lem}
$\cal L$ is everywhere a finite union of smooth real analytic three dimensional submanifolds of $U_2$. In particular, at all of its smooth points, the CR dimension of $\cal L$ 
is one.
\end{lem}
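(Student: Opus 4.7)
The plan is to realize $\cal L$ as the image of a natural real-analytic map from a four-real-dimensional source and to use the special structure of $T^+_2$---being maximally totally real, sitting inside $M$, and Levi-flat---to show that this map has generic rank at most three. After a local biholomorphic change of coordinates near $p$ I may assume that $T^+_2 = \mathbb R^2 \subset \mathbb C^2$, so every $w \in T^+_2$ satisfies $\bar w = w$. Since each Segre variety $Q_w$ near $p$ is a graph $z_2 = h(w, z_1)$ with $h$ depending antiholomorphically on $w$, I obtain the real-analytic parameterization
\[
\Phi : T^+_2 \times \mathbb C \longrightarrow U_2, \qquad \Phi(w, t) = \bigl( t, h(w, t) \bigr),
\]
whose image is exactly $\cal L$.

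The heart of the proof is the rank bound on $\Phi$. Three ingredients enter: (a) the inclusion $T^+_2 \subset M$ yields the identity $h(w, w_1) = w_2$ for every $w = (w_1, w_2) \in T^+_2$; (b) the reality condition $\bar w = w$ combined with the antiholomorphic dependence of $h$ on $w$ gives conjugation relations between the two tangential derivatives $\partial h / \partial w_j$ along $T^+_2$; (c) the vanishing of the Levi form along $T^+_2$ forces the complex tangent to $Q_w$ at $w$ to coincide with $T^c_w M$, producing an additional first-order osculation identity. Differentiating (a) in two independent real tangential directions on $T^+_2$ and substituting the identities from (b) and (c) would produce an $\mathbb R$-linear dependence between the two tangential variations of $w \mapsto h(w, \cdot)$, forcing the real Jacobian of $\Phi$ to have rank at most three at every point. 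Standard real-analytic stratification arguments then yield that $\cal L$ is locally a finite union of smooth real-analytic three-dimensional submanifolds of $U_2$. The CR-dimension claim is immediate from there: at a smooth point $\z \in Q_w \subset \cal L$ the tangent space $T_\z \cal L$ contains the complex tangent line $T_\z Q_w$, so $T^c_\z \cal L$ has complex dimension at least one, and since $\cal L$ is real three-dimensional in $\mathbb C^2$ this forces the CR dimension to equal one.

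The main obstacle is the rank computation itself: producing the explicit $\mathbb R$-linear dependence between the two tangential variations of $h$ along $T^+_2$ by simultaneously invoking the reality, antiholomorphic, and Levi-flatness identities. Everything else---real-analyticity of the image, stratification into smooth strata, and the CR-dimension count---is standard once the rank bound is in hand.
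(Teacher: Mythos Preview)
Your overall plan matches the paper's: realize $\cal L$ as the image of a real four-dimensional parameter space under a real-analytic map and show the map has rank at most three, then invoke stratification. The paper does exactly this, but it executes the rank bound very differently from what you outline, and the ingredients you list do not obviously produce it.

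The difficulty is your item (c). The statement ``the complex tangent to $Q_w$ at $w$ coincides with $T^c_w M$'' holds at \emph{every} point $w \in M$, not just at Levi-degenerate ones; it is the standard fact that $Q_w$ is tangent to $M$ at $w$ to first order. So (c) as written gives you nothing beyond (a). The genuine contribution of Levi-flatness is a second-order condition on the defining function, and you have not explained how to convert that into a first-order linear dependence between $\partial h/\partial w_1$ and $\partial h/\partial w_2$ along $T^+_2$ at \emph{all} points of $Q_w$, not just at $w$ itself. Differentiating the identity $h(w,w_1)=w_2$ along $T^+_2$ only controls the derivatives of $h$ at the single point $z_1=w_1$, whereas the rank bound must hold for every $z_1$. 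Since you yourself flag the rank computation as ``the main obstacle'' and have not carried it out, this is a real gap in the proposal, not a routine omission.

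The paper sidesteps a direct identity-chase. It fixes a normal form $r = 2x_2 + (2x_1)^{2m} a(z_1,y_2)$ so that $T^+_2 = i\mathbb R^2$, passes to the complexification $M^{\mathbb C} \subset U_2 \times U_2^{*}$, and sets $\tilde{\cal L} = M^{\mathbb C} \cap \{\Re w_1 = \Re w_2 = 0\}$, a smooth real four-manifold with $\pi(\tilde{\cal L}) = \cal L$. It then (i) computes explicitly, via the implicit function theorem in the chosen normal form, that $d\pi$ has rank three at the origin; (ii) observes that $\tilde{\cal L}$ has CR dimension one near the origin; and (iii) argues that because $\pi$ is holomorphic on the ambient $U_2 \times U_2^{*}$, full rank four would force an incompatibility between the CR structure of $T_a\tilde{\cal L}$ (complex line plus totally real two-plane) and the complex kernel of $d\pi$. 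Properness of $\pi|_{\tilde{\cal L}}$ together with the constant-rank theorem and subanalytic stratification then gives the finite-union statement. Thus the paper's rank bound rests on the interaction between the CR geometry of $\tilde{\cal L}$ and the complex-linearity of $d\pi$, not on differentiating the inclusion $T^+_2 \subset M$.

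One place where your sketch is cleaner than the paper: your argument for CR dimension one at smooth points of $\cal L$ --- that $T_\zeta Q_w \subset T_\zeta \cal L$ forces a complex line in the tangent space --- is direct and avoids the paper's route through the CR structure of $\tilde{\cal L}$.
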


\begin{proof}
It is possible to choose coordinates around $p = 0$ so that $T^+_2$ becomes the totally real plane $i \mbb R^2 \subset \mbb C^2$. The defining function for $M$ near $p = 0$ can 
then be written as
\[
r(z, \ov w) = 2 x_2 + (2 x_1)^{2m} a(z_1, y_2)
\]
where $z_1 = x_1 + i y_1, z_2 = x_2 + i y_2$ with $m > 1$, and $a(z_1, y_2)$ a real analytic function which is positive near the origin. Recall that the complexification of $M$ 
is given by $r(z, \ov w)$ where $(z, w) \in U_2 \times U_2$. Let $U^{\ast}_2$ denote the open set $U_2$ equipped with the conjugate holomorphic structure. Then
\[
M^{\mbb C} = \big\{ (z, w) \in U_2 \times U^{\ast}_2 : r(z, \ov w) = 0 \big\}
\]
is a smooth, closed complex manifold in $U_2 \times U^{\ast}_2$ of dimension $3$. Note that
\[
r(z, \ov w) = z_2 + \ov w_2 + (z_1 \ov w_1)^{2m} \tilde a(z_1, w)
\]
where $\tilde a(0, 0) > 0$. By Lemma 12.1 of \cite{DP1} it follows that $(Q_0 \sm \{0\}) \cap M \subset M^+_s$, i.e., $Q_0$ intersects $T^+_2$ only at the origin. Let $w_1 = 
u_1 + i v_1, w_2 = u_2 + i v_2$ and define
\begin{align*}
\ti {\cal L} &= \big\{ (z, w) \in U_2 \times U^{\ast}_2 : r(z, \ov w) = 0, u_1 = u_2 = 0 \big\}\\
             &= M^{\mbb C} \cap \big\{ (z, w) \in U_2 \times U^{\ast}_2 : u_1 = u_2 = 0 \big\}.
\end{align*}
Let $\pi : U_2 \times U^{\ast}_2 \ra U_2$ be the coordinate projection onto the $(z_1, z_2)$ variables. Then it can be seen that $\pi(\ti {\cal L}) = \cal L$. The 
equations that define $\ti {\cal L} \subset U_2 \times U^{\ast}_2$ are
\begin{align*}
f_1 &= x_2 + u_2 + \Re \big( (z_1 + \ov w_1)^{2m} \ti a(z_1, w) \big),\\
f_2 &= y_2 - v_2 + \Im \big( (z_1 + \ov w_1)^{2m} \ti a(z_1, w) \big),\\
f_3 &= u_1,\\
f_4 &= u_2
\end{align*}
where these are regarded as functions of $x_1, y_1, x_2, y_2, u_1, v_1, u_2$ and $v_2$. Define the map
\[
F : U_2 \times U^{\ast}_2 \ra \mbb R^4
\]
by $F = (f_1, f_2, f_3, f_4)$ and note that $\ti {\cal L} = F^{-1}(0)$. Then the derivative of $F$ at the origin is
\[
DF(0) =
 \begin{pmatrix}
 0 & 0 & 1 & 0 & 0 & 0 & 1 & 0\\
 0 & 0 & 0 & 1 & 0 & 0 & 0 & -1\\
 0 & 0 & 0 & 0 & 1 & 0 & 0 & 0\\
 0 & 0 & 0 & 0 & 0 & 0 & 1 & 0
 \end{pmatrix},
\]
where the rows are the gradients of $f_1, f_2, f_3, f_4$ with respect to the variables mentioned above in that order. This matrix has full rank, for the minor formed 
by the partial derivatives with respect to $x_2, y_2, u_1, u_2$ is non-zero. The implicit function theorem therefore shows that $\ti {\cal L}$ is a smooth real four dimensional 
manifold near the origin and the local coordinates on $\ti {\cal L}$ are given by $x_1, y_1, v_1, v_2$. Moreover, there are real analytic functions $h_1, h_2, h_3, h_4$ defined 
in a neighbourhood of the origin in the $x_1, y_1, v_1, v_2$ variables such that $\ti {\cal L}$ is described by
\begin{align*}
x_2 &= h_1(x_1, y_1, v_1, v_2),\\ 
y_2 &= h_2(x_1, y_1, v_1, v_2),\\
u_1 &= h_3(x_1, y_1, v_1, v_2),\\
u_2 &= h_4(x_1, y_1, v_1, v_2).
\end{align*}
Working with the $f_i$'s it can be seen that the real tangent space to $\ti {\cal L}$ at the origin is the direct sum of the complex line spanned by $z_1 = x_1 + i y_1$ and a 
totally real plane spanned by $v_1, v_2$. This description of the tangent space to $\ti {\cal L}$ persists in a neighbourhood of the origin, and therefore the CR dimension of 
$\ti {\cal L}$ is one near the origin.

\medskip

Now if $(0, w^0) \in \pi^{-1} \cap \ti {\cal L}$ then $0 \in Q_{w^0}$ and hence $w^0 \in Q_0$. But it is known that $Q_0$ intersects $T^+_2$ only at the origin, if $U_2$ is small 
enough, and therefore with this choice of $U_2$, it follows that $w^0 = 0$. This shows that $\pi : \ti {\cal L} \ra U_2$ is proper. Hence $\cal L$ is subanalytic and therefore 
admits (cf. \cite{Hi}) a subanalytic stratification by real analytic submanifolds. To see what the dimension of $\cal L$ is, observe that the holomorphic projection $\pi$ 
restricted to $\ti {\cal L}$ is of the form
\[
\pi(x_1, y_1, v_1, v_2) = (x_1, y_1, x_2, y_2)
\]
in terms of the local coordinates on $\ti {\cal L}$. The differential
\[
d \pi = 
 \begin{pmatrix}
 e_1\\
 e_2\\
 \nabla h_1\\
 \nabla h_2 
 \end{pmatrix}
\]
where $e_1 = (1, 0, 0, 0)$, $e_2 = (0, 1, 0, 0)$ and $\nabla h_1$, $\nabla h_2$ are the gradients with respect to $x_1, y_1, v_1, v_2$. To compute them, note that if $h = (h_1, 
h_2, h_3, h_4)$, the implicit function theorem again gives
\[
Dh = - \big(\pa f_i / \pa a_j \big)^{-1}_{i, j} \cdot \big( \pa f_i / \pa b_j)_{i, j},
\]
where $(a_1, a_2, a_3, a_4) = (x_2, y_2, u_1, u_2)$ and $(b_1, b_2, b_3, b_4) = (x_1, y_1, v_1, v_2)$. Therefore,
\[
Dh(0) = 
 \begin{pmatrix}
 0 & 0 & 0 & 0\\
 0 & 0 & 0 & 1\\
 0 & 0 & 0 & 0\\
 0 & 0 & 0 & 0
 \end{pmatrix},
\]
and hence $\nabla h_1(0) = (0, 0, 0, 0)$ and $\nabla h_2(0) = (0, 0, 0, 1)$. This implies that the rank of $d \pi$ equals $3$ at the origin. Note that $d \pi$ cannot have full 
rank, i.e., $4$ at points close to the origin, for if it does have full rank at $a \in \ti {\cal L}$ then
\[
d \pi(a) : T_a \ti {\cal L} \ra \mbb C^2
\]
is an isomorphism. But $T_a \ti {\cal L}$ is the sum of a complex line (close to $z_1 = x_1 + i y _1$) and a totally real subspace (close to that spanned by $v_1, v_2$). Hence 
the kernel of $d \pi(a)$ is the sum of a complex line (close to $z_2 = x_2 + i y_2$) and a totally real subspace (close to that spanned by $u_1, u_2$). This, however, is a 
contradiction since $\ker d \pi(a)$ must be a complex subspace. Thus the rank of $\pi$ equals $3$ everywhere on $\ti {\cal L}$ and the rank theorem combined with the properness 
of $\pi$ (cf. Proposition~3.5 and in particular Lemma~3.5.1 in \cite{Hi}) show that $\cal L = \pi(\ti {\cal L})$ is locally everywhere a finite union of real analytic three 
dimensional submanifolds of $U_2$.
\end{proof}

\begin{lem}
$\ov X_f$ is complex analytic near $\cal L \times T'^-_2$.
\end{lem}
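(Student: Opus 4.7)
The plan is to apply the Chirka removability theorem (\cite{ch1}, \cite{ch2}) that was already used in Section~4, but now with the real analytic three-dimensional set $\cal L$ playing the role that the hypersurface $M$ played previously. The key observation is that at its smooth points, $\cal L \times T'^-_2$ is a real analytic CR submanifold of $\mbb C^4$ of CR dimension exactly one: by Lemma~5.3, $\cal L$ is locally a finite union of smooth real analytic three-dimensional submanifolds with CR dimension one, while $T'^-_2$ is totally real of real dimension two and therefore of CR dimension zero, so their product has CR dimension $1+0 = 1$, which is strictly less than the complex dimension two of $X_f$.

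I would then fix a point $(w^0, w'^0) \in \cal L \times T'^-_2$ at which the first factor is smooth, and choose a small bidisc neighbourhood $W \subset \mbb C^4$. Using Lemma~5.2, which localizes all limit points of $X_f$ on $U^+_1 \times (U' \cap M')$ inside $\cal L \times (T' \sm (M'^+ \cup M'^-))$, together with the fact that $X_f$ is closed in $U^+_1 \times U'^+$, the neighbourhood $W$ can be shrunk so that the only limit points of $X_f$ lying in $W$ are contained in $N := W \cap (\cal L \times T'^-_2)$. Consequently $X_f \cap (W \sm N)$ is a pure two-dimensional closed complex analytic subset of $W \sm N$, and since $\dim_{CR} N = 1 < 2$, Chirka's theorem implies that $\ov X_f \cap W$ is a pure two-dimensional closed complex analytic subset of $W$.

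For points of $\cal L \times T'^-_2$ at which $\cal L$ is singular, namely where two or more of the three-dimensional sheets from Lemma~5.3 meet, the exceptional locus has strictly smaller real dimension while still having CR dimension at most one. After the top-dimensional smooth part of $\cal L \times T'^-_2$ has been removed as above, the remaining piece is pluripolar and the analytic continuation is completed by a second application of the same removability theorem, or equivalently by Bishop's extension theorem applied to what is then a pluripolar exceptional set.

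The main obstacle, which in this sketch I have glossed over, is to verify rigorously that in a small neighbourhood of any given point of $\cal L \times T'^-_2$ the set $X_f$ has \emph{no} limit points outside $\cal L \times T'^-_2$ itself. Lemma~5.2 handles the limit points lying on $U^+_1 \times (U' \cap M')$, but one must separately rule out limit points on $(U \cap M) \times U'^+$ away from $\cal L \times T'^-_2$; I expect this to follow by an analogue of Lemma~4.7, using the estimate (3.2) together with the existence of the local plurisubharmonic barriers constructed in Section~3 that are available near $p \in T^+_2$.
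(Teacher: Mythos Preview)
Your approach is essentially the paper's: both reduce the problem to Chirka's removability theorem applied to the five-dimensional CR manifold $\cal L \times T'^-_2$, whose CR dimension is one, which is strictly less than $\dim X_f = 2$, and both handle the singular locus of $\cal L$ by downward induction on the strata.

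Two remarks are worth making. First, the paper phrases Chirka's result as ``$X_f$ admits an analytic continuation $X^{ext}_f$'' and then spends an additional paragraph verifying that $X^{ext}_f=\ov X_f$: it introduces the set $S=\{(w,w')\in X^{ext}_f:\dim_{(w,w')}(\pi')^{-1}(w')\ge 1\}$, uses Cartan--Remmert to bound $\dim S\le 1$, argues that any limit point of $X_f$ outside $S$ would force $\pi'(X_f)$ to cover a one-sided neighbourhood of a point of $M'$ whose boundary meets $M'^+_s$ (contradicting Lemma~5.2), and finally applies Shiffman's theorem. You bypass this by invoking directly the strong form of Chirka's theorem, namely that the \emph{closure} of a pure $p$-dimensional analytic set across a $C^1$ CR submanifold of CR dimension $<p$ is itself analytic; with that formulation the extra step is indeed unnecessary.

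Second, the ``main obstacle'' you flag is not one. The points $(a,a')\in(\ov X_f\sm X_f)\cap(\cal L\times T'^-_2)$ under consideration arise from Lemma~5.2, which locates them on $U^+_1\times(U'\cap M')$; in particular $a\in U^+_1$ strictly. Hence the neighbourhood $W$ you choose can be taken entirely inside $U^+_1\times U'$, so no limit points on $(U\cap M)\times U'^+$ appear in $W$ at all, and no analogue of Lemma~4.7 is needed here.
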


\begin{proof}
It suffices to consider the behaviour of $X_f$ near $(a, a') \in (\ov X_f \sm X_f) \cap (\cal L \times T'^-_2)$. Fix small neighbourhoods $U_a, U'_{a'}$ around $a, a'$ respectively. We may assume that
\[
\cal L \cap U_a = \cal L_1 \cup \cal L_2 \cup \cdots \cup \cal L_{\mu},
\]
where each $\cal L_j$ is a closed, real analytic three dimensional submanifold of $U_a$. To start with, we will assume that $a \in \cal L_j \sm \cup_{i \not= j} \cal L_i$ 
for some $1 \le j \le \mu$, so that $a$ is a smooth point on $\cal L$. Since the CR dimension of $\cal L \times T'^-_2$ is one and $X_f$ has pure dimension two, it follows 
that $X_f$ admits analytic continuation, say $X^{ext}_f \subset U_a \times U'_{a'}$, which is a closed analytic set after shrinking these neighbourhoods if necessary. As 
before, let $\pi, \pi'$ be the coordinate projections onto the factors $U_a, U'_{a'}$ respectively, and define
\[
S = \big\{ (w, w') \in X^{ext}_f : \dim_{(w, w')} (\pi')^{-1}(w') \ge 1 \big\}.
\]
The defining condition for $X_f$, i.e., (2.1), forces $\pi' : X^{ext}_f \ra U'_{a'}$ to be locally proper, and hence $\pi'(X^{ext}_f)$ contains an open subset of $U'_{a'}$. 
Therefore, the Cartan-Remmert theorem (in \cite{L}, for example) implies that $\dim S \le 1$.

\medskip

Suppose that $(b, b') \in (\ov X_f \sm X_f) \sm S \subset \cal L \times T'^-_2$. It is then possible to choose neighbourhoods $W_b, W'_{b'}$ around $b, b'$ respectively 
such that the projection
\[
\pi': X^{ext}_f \cap (W_b \times W'_{b'}) \ra W'_{b'}
\]
is proper. Since $\ov X_f \sm X_f \subset \cal L \times T'^-_2$, it follows that $\pi'(X_f \cap (W_b \times W'_{b'}))$ contains a one-sided neighbourhood of $b'$, say, 
$\Om' \subset U'^+_1$. Evidently, $\pa \Om'$ contains a point from $M'^+_s$ and this contradicts the fact that the limit points of $X_f$ in $U^+_1 \times (U'\cap M')$ are 
contained in $\cal L \times T'^-_2$. Thus $\ov X_f \sm X_f \subset S$. But the three dimensional Hausdorff measure of $S$ is zero and Shiffman's theorem implies that $\ov 
X_f$ itself is analytic in $W_b \times W'_{b'}$. Therefore, $X^{ext}_f = \ov X_f$. This argument works when $a \in \cal L$ is a smooth point. If $a \in \cal L_{\alpha} \cap 
\cal L _{\beta}$ for $\alpha \not= \beta$, $1 \le \alpha, \beta \le \mu$, the theorems of Cartan-Bruhat (see for example \cite{Na}) show that the singular locus of 
$\cal L_{\alpha} \cap \cal L_{\beta}$ is contained in a real analytic set of strictly lower dimension. Thus it is possible to proceed by downward induction to conclude 
that $\cal L \times T'^-_2$ is a removable singularity for $X_f$.
\end{proof}

\begin{lem}
There exists a closed complex analytic set $\hat X_f \subset U_1 \times U'$ of pure dimension two such that $X_f \subset \hat X_f \cap (U^+_1 \times U'^+)$. In particular, $f$ 
extends holomorphically across $p \in T^+_2$.
\end{lem}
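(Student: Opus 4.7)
The plan is to complete the extension of $X_f$ to a closed analytic set $\hat X_f$ defined in all of $U_1 \tim U'$ by removing the remaining boundary singularities in two stages. By Lemmas 5.1 and 5.2, the only possible limit points of $X_f$ outside itself lie in $\cal L \tim (T' \sm (M'^+ \cup M'^-))$, and by Lemma 5.4, $\ov X_f$ is already complex analytic near the two dimensional stratum $\cal L \tim T'^-_2$. Taken together with $X_f$ itself as an analytic subset of $U^+_1 \tim U'^+$, this produces a closed complex analytic set $\ti X_f$ of pure dimension two defined on $(U_1 \tim U') \sm E$, where
\[
E = \cal L \tim \big( (T' \sm (M'^+ \cup M'^-)) \cap (T'_1 \cup T'_0) \big).
\]

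To remove the remaining singularity $E$, I would invoke Bishop's theorem. By Lemma 5.3, $\cal L$ is a finite union of real analytic three dimensional submanifolds with CR dimension one, while $(T' \sm (M'^+ \cup M'^-)) \cap (T'_1 \cup T'_0)$ is a locally finite union of real analytic arcs and isolated points. Consequently $E$ is contained in a finite union of real analytic submanifolds of $U_1 \tim U' \subset \mbb C^4$ of real dimension at most four, which forces $E$ to be pluripolar. To verify the remaining hypothesis of Bishop's theorem, namely local finiteness of the four dimensional Hausdorff measure of $\ti X_f$ near $E$, I would exploit the properness of the Segre map $\la'$ near $p'$ together with the discreteness of the fibres of the projection $\pi'$ on $\ti X_f$ to obtain uniform volume estimates on compact subsets. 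Bishop's theorem then yields the desired closed complex analytic set $\hat X_f \subset U_1 \tim U'$ of pure dimension two containing $X_f$.

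Having obtained $\hat X_f$, I would verify that the projection $\pi : \hat X_f \ra U_1$ is proper near $p$, using the properness of $\la$ near $p$ together with the invariance relation $f(Q_w \cap D) \supset {}_{{}^sw'}Q'_{w'}$ satisfied on $\hat X_f$ to prevent fibres of $\pi$ from escaping. Then $\hat X_f$ defines a holomorphic correspondence between $U_1$ and $U'$ whose graph contains that of $f$ on a one sided neighbourhood of $p$, and by Theorem 7.4 of \cite{DP1} this correspondence is already single valued, giving the desired holomorphic extension of $f$ across $p \in T^+_2$.

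The main obstacle is the verification of the local finiteness of volume for $\ti X_f$ near $E$, which is the technical heart of the Bishop removability argument. This step must exploit the precise structure of $X_f$ and the ramified covering properties of the Segre projections in a manner analogous to the volume bounds for the $C_j$ obtained via Theorem 7.4 of \cite{DP2} that were used in Section 4.
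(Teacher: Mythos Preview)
Your proposal has a genuine gap: you claim to obtain $\ti X_f$ as a closed analytic subset of $(U_1 \tim U') \sm E$, but $X_f$ lives only in $U^+_1 \tim U'^+$. Lemmas~5.1, 5.2 and~5.4 together control the limit points of $X_f$ on $U^+_1 \tim (\pa U' \sm D')$ and on $U^+_1 \tim (U' \cap M')$, but they say nothing about possible limit points on $(U_1 \cap M) \tim U'$, and your set $E \subset \cal L \tim M'_e$ does not contain most of $(U_1 \cap M) \tim U'$. Hence even after Bishop you would at best have a closed analytic set in $U^+_1 \tim U'$, not in $U_1 \tim U'$; the essential step of extending across the real hypersurface $M$ is entirely absent from your outline.

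The paper proceeds differently and in two stages. First, Bishop's theorem is applied inside $U^+_1 \tim U'$, and the required hypothesis is obtained not by a direct volume estimate via $\pi'$ but from the \emph{properness} of $\pi : \ov X_f \sm (\cal L \tim M'_e) \ra U^+_1$, which is an immediate consequence of Lemmas~5.1 and~5.4. This yields $\ov X_f$ analytic in $U^+_1 \tim U'$ with $\pi$ still proper. Second, the coordinate functions $z'_1, z'_2$ restricted to $\ov X_f$ then satisfy monic polynomials with coefficients holomorphic on the one-sided neighbourhood $U^+_1$; these coefficients are extended across $M$ to a full neighbourhood of $p$ by Tr\'epreau's theorem, and the zero locus of the extended polynomials furnishes $\hat X_f \subset U_1 \tim U'$. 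Your proposed route via $\pi'$ and $\la'$ is both harder (discreteness of fibres alone does not yield the uniform volume bounds Bishop needs) and, more importantly, insufficient: it provides no mechanism for crossing $M$, which is precisely what Tr\'epreau's theorem supplies.
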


\begin{proof}
It suffices to show that $X_f$ can be continued across $\cal L \times M'_e$. For this, note that by Lemmas~5.1 and~5.4, the projection
\[
\pi: \ov X_f \sm (\cal L \times M'_e) \ra U^+_1
\]
is proper. The exceptional set $M'_e$ being a locally finite union of real analytic arcs and points is locally pluripolar and hence globally so by Josefson's theorem. Let 
$\varrho$ be a plurisubharmonic function on $\mbb C^4$ such that $U^+_1 \times M'_e \subset \{ \varrho = -\infty \}$. Since $M$ is of finite type near $p$ it follows that 
$\cal L \cap M$ has real dimension at most two and hence it is possible to choose $p^j$, across which $f$ holomorphically extends, to not lie on $\cal L \cap M$. Fix a 
small ball $B \subset U^+_1 \sm \cal L$ on which $f$ is well defined and note that by Lemma 5.2, $\ov X_f$ has no limit points on $B \times (U'\cap M')$. Therefore, the 
pluripolar set $\{ \varrho = -\infty \}$ is a removable singularity for the non-empty analytic set $(\ov X_f \sm (\cal L \times M'_e)) \sm \{ \varrho = -\infty \}$ by 
Bishop's theorem. Hence $\ov X_f \subset U^+_1 \times U'$ is analytic and the projection 
\[
\pi : \ov X_f \ra U^+_1
\]
remains proper. The coordinate functions $z'_i$ (for $i = 1, 2$) restricted to $\ov X_f$ satisfy a monic polynomial whose coefficients are holomorphic functions on 
$U^+_1$. By Trepreau's theorem  \cite{Tr}, each of these functions extend to a fixed, full neighbourhood of $p$ in $\mbb C^2$ and the zero locus of the resulting pair of polynomials, 
which are still monic in $z'_i$ (for $i = 1, 2$) provides the continuation of $X_f$ as an analytic set, say $\hat X_f \subset U_1 \times U'$. By Theorem 7.4 of \cite{DP1} 
it follows that $f$ extends holomorphically across $p \in T^+_2$.
\end{proof}

\subsection{The case when $\mathbf{p \in T^+_1 \cup T^+_0}$} Let $\gamma \subset T^+_1$ be a smooth real analytic arc and suppose that $p \in \gamma$. Then
\[
C = \{ w \in U_1 : \gamma \cap U_1 \subset Q_w \}
\]
is a finite set. Indeed, $\gamma \cap U_1 \subset Q_w$ implies that $Q_w$ is the unique complexification of $\gamma \cap U_1$. Since the Segre map has finite fibres near 
$p$, it follows that $C$ must be finite. We may therefore assume that $Q_p \cap \gamma = \{p\}$ locally. All the previous arguments used in Subsection 5.1 can 
now be applied to show that $f$ holomorphically extends across $p \in \gamma$. The remaining set is discrete in $\gamma$, and again the same arguments apply to show the 
extendability of $f$ across all points on $T^+_1$ and hence also across $T^+_0$.


\section{Proof of Theorem 1.1 -- Case ($iii$)}

\no As discussed in Section 2, it suffices to consider the case when $p$ is on either a one or zero dimensional stratum of the border. Let $\gamma$ be a smooth real analytic 
arc in the border and suppose that $p \in \gamma$. We may also assume that $Q_p \cap \gamma = \{p\}$ locally near $p$ to start with. In particular, if $U$ is a neighbourhood of 
$p$ in $\mbb C^2$ it follows that $Q_p \cap M \cap \pa U$ is contained in the union of $M^+, M^-$ and the two dimensional strata of the border. Therefore, the behaviour of $f$ 
near points on $Q_p \cap M \cap \pa U$ is known by cases (i) and (ii) of the main theorem. Suppose that $p \in cl_f(p)\cap M'^+_s$. Choose a standard pair of neighbourhoods 
$U_1 \subset U_2$ 
around $p$ and $U'_1 \subset U'_2$ around $p'$ so that $X_f$ as in (2.1) is a non-empty closed complex analytic set of pure dimension two. Let $p^j$ be a sequence on $M$ 
converging to $p$, across which $f$ holomorphically extends for each $j$, and such that $f(p^j) \ra p'$ -- such a sequence exists by the reasoning given earlier. By shrinking 
$U'_1, U'_2$ if needed, we may assume that $Q'_{w'} \cap D'$ is relatively compactly contained in $U'_2$ and connected for all $w'\in U'^+_1$. Consider
\[
\tilde X_f = \big\{ (w, w') \in U^+_1 \times U'^+_1 : f({}_{{}^sw}Q_w) \subset Q'_{w'} \cap D \big\},
\]
which evidently contains $X_f$ near points of extendability of $f$ and is also a pure two dimensional local analytic set by the arguments of Lemma 4.1. It is also closed since 
$Q'_{w'} \cap D'$ is connected and compactly contained in $U'_2$, i.e., the germs $f({}_{{}^sw}Q_w)$ cannot escape $U'_2$ because they are contained in $Q'_{w'} \cap D'$. By 
analytic continuation, $X_f \subset \tilde X_f$ everywhere in $U^+_1 \times U'^+_1$. Now Proposition~4.3 in \cite{SV} shows that $X_f$ has no limit points either on $(U_1 \cap M) 
\times U'^+_1$ or $U^+_1 \times (U'\cap M')$. Let $X_j$ and $C_j$ be defined as in (4.3) and (4.4). The absence of limit points of $X_f$ on the aforementioned sets implies that 
$C_j$ is a one dimensional analytic set in $U_1 \times U'^+_1$. Again, Proposition~4.3 and Lemma~5.1 in \cite{SV}
(see the proof of Lemma~4.8 in Section~4 as well), show that 
$\ov C_j$ is analytic in $U_1 \times U'_1$. By Lemma 5.2 of \cite{SV} (or by Lemma 4.8 in Section~4) it follows that the cluster set of $\{C_j\}$ is non-empty in $U^+_1 \times 
U'^+_1$. Moreover, the volumes of $\{C_j\}$ are locally uniformly bounded in $U^+_1 \times U'^+_1$, and hence the sequence converges to a pure one dimensional analytic set, say 
$C_p \subset U^+_1 \times U'^+_1$, that contains $(p, p')$ in its closure. Furthermore, by continuity it is evident that $C_p \subset Q_p \times Q'_{p'}$. Thus there are 
points on $Q_p \sm \{p\}$ over which $X_f$ is a well defined ramified cover. By adapting the arguments in \cite{Sh} as done in case (ii) of Lemma 4.6, it follows that the 
graph of $f$ extends as an analytic set near $(p, p')$, and hence $f$ extends holomorphically across $p$ with $f(p) = p'$. This is clearly a contradiction for it is 
possible to find strongly pseudoconcave points near $p$ that are mapped locally biholomorphically to strongly pseudoconvex points near $p'$. It may be noted that the 
properness of $f$ is not required here -- it suffices for $f$ to have discrete fibres near $p$ and this is guaranteed by Proposition~3.2. Thus the following lemma has 
been proved:

\begin{lem}
Let $D, D'$ be domains in $\mbb C^2$, both possibly unbounded and $f : D \ra D'$ a holomorphic mapping. Suppose that $M \subset \pa D$ and $M'\subset \pa D'$ are open 
pieces which are smooth real analytic and of finite type and let $p \in \gamma$ where $\gamma$ is a one dimensional stratum in the border between the pseudoconvex and 
pseudoconcave points on $M$. Let $U$ be an open neighbourhood of $p$ in $\mbb C^2$ such that the cluster set of $U \cap M$ is bounded and contained in $M'$. Then the cluster 
set of $p$ does not intersect $M'^+_s$.
\end{lem}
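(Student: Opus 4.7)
The plan is to argue by contradiction: assume there is $p' \in cl_f(p) \cap M'^+_s$, and show this forces $f$ to extend holomorphically across $p \in \gamma$, yielding a Levi form contradiction. The setup mirrors Section 4: choose standard pairs $U_1 \Subset U_2$ around $p$ and $U'_1 \Subset U'_2$ around $p'$. Since $p'$ is strongly pseudoconvex we may shrink $U'_1, U'_2$ so that for every $w' \in U'^+_1$, the set $Q'_{w'} \cap D'$ is connected and relatively compactly contained in $U'_2$. By Proposition~3.2, $f$ has discrete fibres in $U^-_1$, so $X_f$ from \eqref{eq:X} is a pure two-dimensional local analytic set, which is non-empty because there exists a sequence $p^j \to p$ on $M$ (produced by the earlier stratification argument together with the extension results in cases~(i) and~(ii)) across which $f$ extends holomorphically with $f(p^j) \to p'$.

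The first technical step is to introduce the auxiliary set $\ti X_f = \{(w, w') \in U^+_1 \tim U'^+_1 : f({}_{{}^sw}Q_w) \subset Q'_{w'} \cap D'\}$. Using the compactness and connectedness of $Q'_{w'} \cap D'$ in $U'_2$, the germs $f({}_{{}^sw}Q_w)$ cannot escape $U'_2$, so $\ti X_f$ is closed; it is also pure two dimensional by the argument of Lemma~4.1, and contains $X_f$ by analytic continuation from the neighbourhood of $(p^j, f(p^j))$. By Proposition~4.3 of \cite{SV}, this forces $X_f$ to have no limit points on $(U_1 \cap M) \tim U'^+_1$ or on $U^+_1 \tim (U' \cap M')$.

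Next, define $X_j$ and $C_j$ exactly as in \eqref{eq:Xj} and (4.4) using the local extensions of $f$ near $p^j$. The absence of limit points of $X_f$ on the strata above, combined with Lemma~5.1 of \cite{SV} (equivalently, the argument of Lemma~4.8), shows $\ov C_j$ is a closed pure one-dimensional analytic subset of $U_1 \tim U'_1$. Lemma~5.2 of \cite{SV} (or Lemma~4.9 of Section~4) shows the cluster set of $\{C_j\}$ is non-empty in $U^+_1 \tim U'^+_1$, and Theorem~7.4 of \cite{DP1} bounds their volumes locally uniformly, so Bishop's theorem yields a pure one-dimensional limit $C_p \subset Q_p \tim Q'_{p'}$ with $(p, p') \in \ov C_p$. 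In particular, $X_f$ is defined and of positive dimension at some $(a, a') \in (Q_p \sm \{p\}) \tim Q'_{p'}$.

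Finally, I would adapt the analytic-continuation-along-Segre-varieties argument of \cite{Sh}, exactly as in Case~(ii) of Lemma~4.6, to push the ramified cover $X_f$ near $(a, a')$ back along $Q_p$ and obtain an analytic extension of the graph of $f$ to a full neighbourhood of $(p, p')$. By \cite{DPJGA}, this gives a holomorphic extension of $f$ across $p$ with $f(p) = p'$. Since $p$ lies on the border between $M^+$ and $M^-$, arbitrarily close to $p$ there are strongly pseudoconcave points; the extended $f$ is locally biholomorphic off a proper analytic subset, so some such point is mapped biholomorphically to a neighbourhood of the strongly pseudoconvex point $p'$, contradicting the invariance of the Levi form. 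The main obstacle is securing the closedness of $\ti X_f$: this rests entirely on the initial shrinking of $U'_2$ so that $Q'_{w'} \cap D'$ stays connected and compactly inside $U'_2$ for all $w' \in U'^+_1$, which is possible precisely because $p' \in M'^+_s$. Without this step the entire scheme of harvesting a limit analytic set $C_p$ from the $C_j$ would collapse.
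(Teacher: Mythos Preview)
Your proposal is correct and follows essentially the same route as the paper's own argument: the contradiction setup with $p'\in M'^+_s$, the auxiliary set $\ti X_f$ (whose closedness hinges on the connectedness and compactness of $Q'_{w'}\cap D'$ near a strongly pseudoconvex point), the appeal to Proposition~4.3 and Lemmas~5.1--5.2 of \cite{SV} to control the limit points of $X_f$ and the analyticity and clustering of the $C_j$, passage to $C_p\subset Q_p\times Q'_{p'}$ via Bishop, and the \cite{Sh}-style continuation to reach the Levi-form contradiction are all exactly what the paper does. The only cosmetic discrepancy is your citation of Theorem~7.4 of \cite{DP1} for the volume bounds, whereas the paper (in Section~4) invokes Theorem~7.4 of \cite{DP2}; otherwise the arguments coincide.
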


Continuing with the proof of Theorem 1.1 (iii), note that since $f$ is proper, it follows that $cl_f(p)$ cannot contain points on $M'$ that lie in $\hat D'$. Indeed, if $p'\in 
cl_f(p) \cap M' \cap \hat D'$, the cluster set of $p'$ under the correspondence $f^{-1} : D'\ra D$ will only contain points in $\hat D$ by Lemma 3.1 of \cite{DP1} and in 
particular it would follow that $p \in \hat D$. Hence $f$ would extend across $p$. Now choose a standard pair of neighbourhoods $U_1 \subset U_2$ around $p$ and a 
neighbourhood $U'$ containing $cl_f(U_2 \cap M)$ as done in Section~5, so that $X_f$ (as in (2.1)) is a non-empty closed complex analytic set in $U^+_1 \times U'^+$. By Lemma 5.1, it follows that $X_f$ has no limit points on $U^+_1 \times (\pa U'\sm D')$. Define
\[
\cal L_{\gamma} = \bigcup_{w \in \gamma \cap U_1} Q_w,
\]
which is seen to be locally foliated by open pieces of Segre varieties at all its regular points and have CR dimension one by the reasoning given in Lemma 5.3. Then by Lemmas~5.4 and~6.1, $\ov X_f$ is analytic near $\cal L_{\gamma} \times T'^+_2$. Again, by Lemma~6.1 and the remark made above about $p$ not belonging to $\hat D$, 
it follows that $X_f$ possibly has limit points only on $U^+_1 \times (M'_e \cup T'^+_1 \cup T'^+_0)$, which is a pluripolar set. The arguments used in Lemma~5.5 show that there is a closed 
complex analytic set $\hat X_f \subset U_1\times U'$ that extends the graph of $f$ near $\{p\} \times cl_f(p)$, and hence $f$ extends holomorphically across $p$. The remaining 
set $C$ is discrete in $\gamma$ and the same arguments apply to show that $f$ extends across each point on $\gamma$ and hence across the zero dimensional strata on the border 
as well. This completes the proof of Theorem 1.1 (iii).



\begin{thebibliography}{DPJGA}

\bibitem{BER} Baouendi, M. Salah; Ebenfelt, Peter; Rothschild, Linda Preiss: Real submanifolds in complex space and their mappings. Princeton Mathematical 
Series, {\bf 47}. Princeton University Press, Princeton, NJ, 1999. xii+404 pp. ISBN: 0-691-00498-6.

\bibitem{BerSu} Berteloot, F., Sukhov, A: {\it On the continuous extension of holomorphic correspondences}, Ann. Scuola Norm. Sup. Pisa Cl. Sci. (4)  {\bf 24}  
(1997), no. 4, 747--766 (1998). 
		
\bibitem{ch1} Chirka, E: {\it Regularity of the boundaries of analytic sets.} Mat. Sb. (N.S.)  {\bf 117} (159)  
(1982), no. 3, 291--336, 431.

\bibitem{ch2} Chirka, E: Complex analytic sets. Mathematics and its Applications (Soviet Series), {\bf 46}. 
Kluwer Academic Publishers Group, Dordrecht, 1989. xx+372 pp. ISBN: 0-7923-0234-6.

\bibitem{CPS} Coupet, B; Pinchuk, S; Sukhov, A: {\it On boundary rigidity and regularity of holomorphic mappings},  
Internat. J. Math.  {\bf 7}  (1996),  no. 5, 617--643. 

\bibitem{DF} Diederich, K; Fornaess, J. E: {\it Proper holomorphic mappings between real-analytic pseudoconvex domains in $\mbb C^n$},  Math. Ann.  {\bf 282}  
(1988),  no. 4, 681--700. 

\bibitem{DFor} Diederich, K; Fornaess, J. E: {\it Biholomorphic mappings between certain real analytic domains in $\mbb C^{2}$},  Math. Ann.  {\bf 245}  (1979), 
no. 3, 255--272. 

\bibitem{DFY} Diederich, K; Fornaess, J. E.; Ye, Z: {\it Biholomorphisms in dimension $2$},  J. Geom. Anal.  {\bf 4}  (1994),  no. 4, 539--552. 

\bibitem{DP1} Diederich, K; Pinchuk, S: {\it Proper holomorphic maps in dimension $2$ extend},  Indiana Univ. Math. J.  {\bf 44}  (1995),  no. 4, 1089--1126. 

\bibitem{DP2}Diederich, K; Pinchuk, S: {\it Regularity of continuous CR maps in arbitrary dimension},  Michigan Math. J.  {\bf 51}  (2003),  no. 1, 111--140.

\bibitem{DPJGA} Diederich, K; Pinchuk, S: {\it Analytic sets extending the graphs of holomorphic mappings},  
J. Geom. Anal.  {\bf 14}  (2004),  no. 2, 231--239.

\bibitem{DW} Diederich, K; Webster, S. M: {\it A reflection principle for degenerate real hypersurfaces},  Duke Math. J.  {\bf 47}  (1980), no. 4, 835--843. 

\bibitem{ForLow} Fornaess, J. E.; Low, E: {\it Proper holomorphic mappings},  Math. Scand.  {\bf 58}  (1986),  no. 2, 311--322. 

\bibitem{FS} Fornaess, J. E.; Sibony, N: {\it Construction of P.S.H. functions on weakly pseudoconvex domains},  Duke Math. J.  {\bf 58}  (1989),  no. 3, 
633--655. 

\bibitem{Hi} Hironaka, H: {\it Subanalytic sets}, Number theory, algebraic geometry and commutative algebra, in honor of Yasuo Akizuki,  pp. 453--493. 
Kinokuniya, Tokyo, 1973. 

\bibitem{L} Lojasiewicz, S: Introduction to complex analytic geometry. Translated from the Polish by Maciej Klimek. Birkhäuser Verlag, Basel, 1991. xiv+523 pp. 
ISBN: 3-7643-1935-6 32-02.

\bibitem{Na} Narasimhan, Raghavan: Introduction to the theory of analytic spaces. Lecture Notes in Mathematics, No. {\bf 25} Springer-Verlag, Berlin-New York  
1966 iii+143 
pp. 

\bibitem{PT} Pinchuk, S; Tsyganov S: {\it Smoothness of CR-mappings between strictly pseudoconvex 
hypersurfaces}, Izv. Akad. Nauk SSSR Ser. Mat. {\bf 53} (1989), no. 5, 1120--1129, 1136; translation in
Math. USSR-Izv. {\bf 35} (1990), no. 2, 457--467 

\bibitem{Ru} Rudin, Walter: Function theory in the unit ball of $\mbb C^{n}$. Grundlehren der Mathematischen Wissenschaften [Fundamental Principles of 
Mathematical Science], {\bf 241} Springer-Verlag, New York-Berlin, 1980. xiii+436 pp. ISBN: 0-387-90514-6.

\bibitem{S} Sibony, Nessim: {\it Some aspects of weakly pseudoconvex domains}, Several complex variables and complex geometry, Part 1 (Santa Cruz, CA, 1989),  
199--231, Proc. Sympos. Pure Math., 52, Part 1, Amer. Math. Soc., Providence, RI, 1991. 

\bibitem{Su} Sukhov, A: {\it On the boundary regularity of holomorphic mappings},  Mat. Sb.  {\bf 185}  (1994),  
no. 12, 131--142;  translation in  Russian Acad. Sci. Sb. Math.  {\bf 83}  (1995),  no. 2, 541--551

\bibitem{Sh} Shafikov, R: {\it Analytic continuation of germs of holomorphic mappings between real hypersurfaces 
in $\mbb C^n$}, Mich. Math. J. {\bf 47} (2000), 133--149. 
 
\bibitem{SV} Shafikov, R; Verma, K: {\it A local extension theorem for proper holomorphic mappings in $\mbb C^2$},
J. Geom. Anal. {\bf 13} (2003), no. 4, 697--714.

\bibitem{Tr} Tr\'{e}preau, J.-M: {\it Sur le prolongement holomorphe des fonctions CR d\'{e}fines sur une hypersurface r\'{e}elle de classe $C^2$ dans $\mbb 
C^n$}, (French) Invent. Math.  83  (1986),  no. 3, 583--592. 

\bibitem{V} Verma, K: {\it Boundary regularity of correspondences in $\mbb C^2$}, Math. Z.  {\bf 231} (1999),  no. 2, 253--299. 

\end{thebibliography}
\end{document}